\documentclass[11pt,a4paper,reqno]{amsart}
\usepackage{amsmath}
\usepackage{amssymb,latexsym}
\usepackage[dvips]{graphicx}
\usepackage{color}
\usepackage{cite}
\usepackage{amsthm}

\newcommand{\R}{\mathbb R}

\newtheorem{theorem}{Theorem} [section]
\newtheorem{lemma}{Lemma} [section]
\newtheorem{proposition}{Proposition} [section]
\newtheorem{corollary}{Corollary} [section]

\newtheorem{remark}{Remark}[section]

\let\ssection=\section\renewcommand{\section}{\setcounter{equation}{0}\ssection}
%%%%%%%%%%%%%%%%%%%%%%%%%%%%%%%%%%%%%%%%%%%%%%%%%%%%%%%%%%%%%%%%%%
\begin{document}
\date{}
\address{Mohamad Darwich: Facult\'e des sciences de l'Universit\'e Libanaise, D\'epartement de math\'ematiques pures, Hadath, Liban} \email{Mohamad.Darwich@lmpt.univ-tours.fr}
\address{Luc Molinet: Laboratoire de Math\'ematiques et Physique Th\'eorique\\
Universt\'e Francois Rabelais Tours,
CNRS UMR 7350- F\'ed\'eration Denis Poisson\\
Parc Grandmont, 37200 Tours, France}
\email{Luc.Molinet@lmpt.univ-tours.fr}
\title[Blowup]{Some remarks on the nonlinear Schr\"odinger Equation with  fractional dissipation.}
\author{Mohamad Darwich and Luc Molinet. }

\keywords{Damped Nonlinear Schr\"odinger Equation, Blow-up, Global existence.}
\begin{abstract}
We consider the Cauchy problem for the  $L^{2}$-critical  focussing nonlinear Schr\"{o}dinger equation with a fractional dissipation.
 According to the order of the fractional dissipation, we prove  the global existence  or the
existence of finite time blowup dynamics with the log-log blow-up speed  for $\|\nabla u(t)\|_{L^2}$.
\end{abstract}

\maketitle
%%%%%%%%%%%%%%%%%%%%%%%%%%%%%%%%%%%%%%%%%%%%%%%%%%%%%%%%%%%%%%%%%%%%%%%%%%%%%%%%%%%%%%%%%%%%%%%%%%%%%%%%%%%%%%%%%%%%%%%%%%%%%%%%%%%
\section{Introduction}
In this paper, we study  the blowup and the global existence of solutions for the $L^2$-critical nonlinear Schr\"odinger equation (NLS) with a fractional dissipation term:
\begin{equation}
\left\{
\begin{array}{l}
%$$ \left\lbrace \begin{array}{l}
iu_{t} + \Delta{u} +|u|^{\frac{4}{d}}u + ia(-\Delta)^s u =0,  (t,x) \in [0,\infty[\times \mathbb{R}^{d}, d=1,2,3,4. \\
u(0)= u_{0} \in H^r(\mathbb{R}^{d})
%\end{array} $$
\end{array}
\right. \label{NLSas}
\end{equation}
  where $a > 0$ is the coefficient of friction,  $ s\geq 0 $ and  $ r\in \R $.
  
%Note that  if we replace  $+|u|^{\frac{4}{d}}u$ by $-|u|^{\frac{4}{d}}u$ , (\ref{NLSap}) becomes the 
 %defocusing NLS equation.
The NLS equation ($a=0$) arises in various areas of nonlinear optics, plasma physics and fluid mechanics to describe propagation phenomena in dispersive media.
To take into account weak dissipation effects, one usually add a damping term as in the  linear damped NLS equation (see for instance Fibich \cite{FibichG} ):
$$i u_t +\Delta u +ia  u + |u|^p  u =0 , a>0,$$
or a laplacian term as in the following complex Ginzburg-Landau equation studied in Passota-Sulem-Sulem \cite{passot}:.

$$i u_t +\Delta u -ia \Delta  u + |u|^p  u =0   , a>0,$$

However, in many cases of practical importance the damping is not described by a  local term even in the long-wavelenth limit. In media with dispersion the weak dissipation is, in general, non local (see for instance Ott-Sudan \cite{Ott}). It is thus quite natural to complete the NLS equation by a non local dissipative term in order to take into account some dissipation phenomena. 

In this this paper we complete the $L^2$-critical NLS equation (\ref{NLS}) with a  fractional laplacian of order $ 2s $, $ s>0 $,  and study the influence of this term on the blow-up phenomena for this equation.
%Fibich \cite{Fibich}  noted  that in the nonlinear optics context, the origin of the nonlinear
%damping is multiphoton absorption. For example, in the case of solids the number $p$ corresponds
%to the number of photons it takes to make a transition from the valence band to
%the conduction band. Similar behavior can occur with free atoms, in this case $p$
%corresponds to the number of photons needed to make a transition from the ground
%state to some excited state or to the continuum.
\\
Recall that  the Cauchy problem for  the $L^2$-critical focussing nonlinear Schr\"{o}dinger equation ($a=0$):\\
\begin{equation}
\left\{
\begin{array}{l}
%$$ \left\lbrace \begin{array}{l}
 iu_{t} + \Delta u + |u|^{\frac{4}{d}}u = 0\\
 u(0)=u_{0} \in H^{r}(\mathbb{R}^{d})
 %\end{array} $$
 \end{array}
\right.\label{NLS}
 \end{equation}
has been  studied by a lot of authors (see for instance \cite{Kato}, \cite{CW2}, \cite{Cazenave1}) and it is known that the problem is locally well-posed in $H^r(\mathbb{R}^{d})$ for $ r\ge 0 $ : 
For any $u_{0} \in H^{r}(\mathbb{R}^{d})$, with $ r\ge 0 $, there exist $T >0 $ and a unique solution $u$ of $(\ref{NLS})$
 with $u(0)=u_{0}$ such that $u \in C([[0,T]);H^r(\mathbb{R}^{d}))$. Moreover, if $T^*$  is the maximal existence time of the solution $u$ in $ H^r(\R^d) $ 
 then $\displaystyle{ \lim_{t\rightarrow T^*}{\|u(t)\|_{L^2(\mathbb{R}^{d})}}}=\infty$.\\

Let us mention that in the case $ a > 0$ the same results on the Cauchy problem for (\ref{NLSas}) can be established  in exactly the same way as in the case $a = 0$, since the same Strichartz estimates hold (see for instance  \cite{Zhang}).
 %Therefore, even if numerical simulations suggest the existence of finite time blowup solutions 
%in this case(see Fibich \cite{Fibich}), there does not exist any mathematical proof of blow-up in the critical case.\\ 

  For $u_0 \in  H^1(\R^d)$, a sharp criterion for global existence for (\ref{NLS}) has been exhibited by Weinstein
\cite{weinst}:  Let  $Q$  be the unique radial positive solution to 
 \begin{equation}\label{ellip}
 \Delta Q + Q|Q|^{\frac{4}{d}} = Q.
 \end{equation}
If $\|u_0\|_{L^2} < \|Q\|_{L^2}$ then  the solution of (\ref{NLS}) is global in $H^1$. This follows from the conservation
of the energy and the $L^2$ norm and the sharp Gagliardo-Nirenberg inequality which ensures that 
\begin{equation}\label{ZZ}
\forall u \in H^1,  \; E(u) \geq \frac{1}{2}(\int |\nabla u|^2)\bigg(1 - \big(\frac{\int |u|^2}{\int |Q|^2}\big)^{\frac{2}{d}}\bigg). \
\end{equation} 
Actually, it was recently proven that any solution of \eqref{NLS} emanating from an initial datum $ u_0\in L^2(\R^d) $ with $ \|u_0\|_{L^2} <\|Q\|_{L^2}$ 
is global and does scatter (cf. \cite{dodson1}).

On the other hand, there exists explicit solutions with $\|u_0\|_{L^2} =\|Q\|_{L^2}$ that blow up at some  time $T>0  $  with a $ H^1$norm that grows as $\frac{1}{T-t}$.\\               
In the series of papers \cite{MerleRaphael1,Merle6}, Merle and Raphael  studied the blowup for (\ref{NLS}) with
$ \|Q\|_{L^2}<  \|u_0\|_{L^2} < \|Q\|_{L^2} + \delta$, $\delta$ small
 and proved  the
  existence of the blowup regime corresponding to the log-log law:
\begin{equation}\label{speed}
\displaystyle{\|u(t)\|_{H^1(\mathbb{R}^{d})} \sim \bigg(\frac{\text{log}\left|\text{log}(T-t)\right|}{T-t}\bigg)^{\frac{1}{2}}.}
\end{equation}

Recall that the evolution of (\ref{NLS}) admits the following conservation laws in the
energy space $H^1$:\\
$L^2$-norm : $m(u)=\left\|u\right\|_{L^2} = \Bigl( \displaystyle{\int |u(x) |^2dx}\Bigr)^{1/2}$.\\
Energy : $E(u) = \frac{1}{2}\|\nabla u\|_{L^2}^{2} - \frac{d}{4 + 2d}\|u\|_{L^{\frac{4}{d}+2}}^{\frac{4}{d}+2}. $\\
Kinetic momentum : $P(u)=Im(\displaystyle{\int} \nabla u(x) \overline{u}(x)\, dx .$\\
Now, for  (\ref{NLSas}) with $ a>0$, there does not exist conserved quantities anymore.
However, it is easy to prove that if $u$ is a  smooth solution of (\ref{NLSas}) on $[0,T[ $, then for all $ t\in [0,T[ $ it holds
\begin{equation}\label{masse}
\|u(t)\|_{L^2}^2 + a \int_0^t \|(-\Delta)^{\frac{s}{2}}u\|^2_{L^2} = \|u_0\|_{L^2}^2 \; .
\end{equation}
\begin{equation}\label{energie}
\frac{d}{dt} E(u(t)) =  \displaystyle{-a\int ((-\Delta)^{\frac{s+1}{2}}u(t))^2 +a Im \int (-\Delta)^s u(t) |u(t)|^{\frac{4}{d}}\overline{u}(t)}.
\end{equation}
\begin{equation}\label{momentum}
\displaystyle{\frac{1}{2} \frac{d}{dt} P(u(t)) = a Im \int (-\Delta)^{s}u(t)\overline{\nabla u}(t)}.
\end{equation}
In  \cite{Darwich}, the first author  studied the case $ s=0$. He  proved the global existence  in $H^1$ for
$\|u_0\|_{L^2} \leq \|Q\|_{L^2}$, and  showed that the log-log regime is stable by  such perturbations (i.e. there exist  solutions 
 blows up in finite time with the log-log law).\\
 In  \cite{passot}, Passot, Sulem and  Sulem  proved that 
the solutions  are global in $H^1(\mathbb{R}^{2})$ for $s=1$. However, their method does not seem to apply for any other values of $d$.\\
Our aim in 
this paper  is to establish some results, for  $ s>0 $, on  the global existence or the existence
of finite time blowup dynamics with the log-log blow-up speed for $\|\nabla u \|_{L^2}$.
\vspace{0,3cm} \\
Let us now state our results:

\begin{theorem}\label{theorem1}
Let $d=1,2,3,4$ and  $0 < s < 1$ then there exists $\delta_0 > 0$ such that $\forall a > 0$ and $\forall \delta \in]0, \delta_{0}[$,
 there exists $u_0 \in H^1$ with $\|u_0\|_{L^2} = \|Q\|_{L^2} + \delta$, such that the solution of (\ref{NLSas}) blows up in finite time in the log-log regime.
\end{theorem}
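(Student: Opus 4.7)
The plan is to adapt the Merle--Raphael log-log blow-up construction for the unperturbed $L^2$-critical NLS to our setting, treating the fractional dissipation with $0<s<1$ as a controllable perturbation. This follows the scheme already used in \cite{Darwich} for $s=0$, and the restriction $s<1$ will turn out to be the natural threshold under which the dissipation is negligible at the blow-up scale.

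First I would introduce the Merle--Raphael geometric decomposition close to blow-up:
$u(t,x) = \frac{1}{\lambda(t)^{d/2}}\bigl(Q_{b(t)} + \varepsilon\bigr)\!\bigl(t, \tfrac{x-x(t)}{\lambda(t)}\bigr)e^{i\gamma(t)}$,
with $Q_b$ a deformation of the ground state $Q$, $\varepsilon$ small in $H^1$, and modulation parameters $(\lambda,b,x,\gamma)$ fixed by orthogonality conditions. The equations for $(\lambda,b,x,\gamma)$ are derived exactly as in \cite{MerleRaphael1, Merle6}, together with extra source terms produced by $ia(-\Delta)^s u$. After rescaling these extra terms have size $a\lambda^{-2s}$ times a quantity bounded in $\varepsilon$ and $Q_b$, and are dominated by the main driving terms as long as the cumulative dissipation stays small.

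Second, I would quantify the dissipation itself. Using the interpolation inequality $\|(-\Delta)^{s/2}u\|_{L^2}^2 \leq \|u\|_{L^2}^{2(1-s)}\|\nabla u\|_{L^2}^{2s}$ together with the bootstrap bound $\|\nabla u(t)\|_{L^2}^2 \lesssim (\log|\log(T-t)|)/(T-t)$ coming from the log-log regime, the mass identity \eqref{masse} yields
\begin{equation*}
a\int_0^T \|(-\Delta)^{s/2}u(\tau)\|_{L^2}^2\,d\tau \;\lesssim\; a\int_0^T \Bigl(\frac{\log|\log(T-\tau)|}{T-\tau}\Bigr)^{\!s} d\tau \;<\; +\infty ,
\end{equation*}
finite precisely because $s<1$ (and small if the bootstrap window is taken close to $T$). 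Analogous interpolations bound the right-hand sides of \eqref{energie} and \eqref{momentum}, so that $\|u(t)\|_{L^2}$, $E(u(t))$ and $P(u(t))$ each vary only by an $O(a)$ quantity that can be absorbed in the subsequent analysis.

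The third step is to close the bootstrap. I would select $u_0$ in a refined version of Merle--Raphael's admissible open set with $\|u_0\|_{L^2}^2 = \|Q\|_{L^2}^2 + \delta$, small negative energy and $b(0)>0$. Along the perturbed flow one monitors the Merle--Raphael Lyapunov functional $\mathcal{J}$: in rescaled time its monotonicity formula produces the usual main dissipative term of size $-\|\varepsilon\|_{H^1}^2$, together with fractional-damping contributions whose time integrals are $O(a)$ by the previous step and can be absorbed into the Lyapunov error. This yields $b(t)>0$ and $\lambda(t)\to 0$ in finite time with the log-log rate \eqref{speed}. The main obstacle is to control the unsigned term $a\,\mathrm{Im}\int(-\Delta)^s u\cdot|u|^{4/d}\overline{u}$ in \eqref{energie} by interpolation in a way compatible with the Lyapunov monotonicity; this is exactly where the hypothesis $s<1$ is sharp, because for $s\ge 1$ the integrated dissipation at the log-log scale ceases to be summable and the perturbative scheme breaks down, consistent with the global-in-$H^1$ result of \cite{passot} at $s=1$, $d=2$.
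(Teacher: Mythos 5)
Your overall strategy --- perturb the Merle--Raphael log-log construction and show, following \cite{Planchon1} and \cite{Darwich}, that the fractional dissipation is negligible at the blow-up scale --- is the one the paper follows, and your treatment of the mass via \eqref{masse} is correct. But there is a genuine gap at the decisive quantitative step, the control of the energy and momentum. You assert that ``analogous interpolations'' applied to \eqref{energie} show that $E(u(t))$ and $P(u(t))$ vary only by an $O(a)$ quantity. This is false. Pointwise in time the unsigned term in \eqref{energie} is bounded (at best) by $a\|(-\Delta)^s u\|_{L^2}\|u\|_{L^{8/d+2}}^{4/d+1}\lesssim a\|\nabla u\|_{L^2}^{2+2s}$, and in the log-log regime $\|\nabla u(\tau)\|_{L^2}^{2+2s}\sim\bigl((T-\tau)^{-1}\log|\log(T-\tau)|\bigr)^{1+s}$ has a non-integrable exponent $1+s>1$; its time integral up to $t$ grows like $\lambda(t)^{-2s}$ times logarithms, hence is unbounded as $t\to T$. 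The correct statement, and the one the paper proves, is the weaker bound $|E(u(t))|\lesssim \log(\lambda(t))\,\lambda(t)^{-2s}$ (and $|P(u(t))|\lesssim \log(\lambda(t))\,\lambda(t)^{-2s/(s+1)}$), which suffices because, by the Planchon--Rapha\"el observation, any growth algebraically below $\lambda^{-2}$ (resp.\ $\lambda^{-1}$) closes the bootstrap --- and this is exactly where $s<1$ enters. Replacing this by an ``$O(a)$ variation to be absorbed'' skips the actual difficulty.

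Moreover, even the weaker bound cannot be reached by the fixed-time interpolation you invoke once $s>1/2$: the inequality $\|(-\Delta)^s u\|_{L^2}\le\|\nabla u\|_{L^2}^{2s}\|u\|_{L^2}^{1-2s}$ requires $2s\le 1$, and for $1/2<s<1$ the quantities $\|(-\Delta)^s u\|_{L^2}$ and $\|(-\Delta)^{\frac{s}{2}+\frac14}u\|_{L^2}$ are not controlled by the $H^1$ norm at fixed time. Absorbing them into the dissipative term $-a\|(-\Delta)^{\frac{s+1}{2}}u\|_{L^2}^2$ only works for small mass (this is how part (1) of Theorem \ref{theorem4} is proved), not in the supercritical-mass blow-up regime. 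The paper circumvents this with a space-time argument: an $H^1$ energy estimate giving control of $\|(-\Delta)^{\frac{s+1}{2}}u\|_{L^2_tL^2_x}$ (Lemma \ref{prop3}), local Strichartz bounds on intervals of length $\sim\|u(t)\|_{H^1}^{-2}$ (Lemma \ref{us1}), and a summation over the $\sim k$ such intervals in each doubling interval $[t_k,t_{k+1}]$ using \eqref{tk}. Some device of this kind is indispensable in your scheme as well; without it the argument does not close for $s\in(1/2,1)$, and even for $s\le 1/2$ you must replace ``$O(a)$'' by ``$O\bigl(a\,\lambda^{-2s}\log\lambda\bigr)$'' and then verify, as in \cite{Planchon1}, that such a growth is admissible in the Lyapunov/bootstrap analysis.
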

\begin{theorem}\label{firsttheorem}
Let $d=1,2,3,4$, $s \geq 1$ and $ r \ge 0 $.  Then the Cauchy problem (\ref{NLSas}) is globally well-posed in $ H^r(\R^d) $.
\end{theorem}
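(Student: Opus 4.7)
The strategy I would follow is the standard two-step one: prove local well-posedness, derive an a priori $H^1$ bound, and reduce the other values of $r$ to these two ingredients. The crucial new feature in the damped case $s\geq 1$ is the parabolic smoothing of the semigroup $S(t)=e^{it\Delta-at(-\Delta)^s}$, whose Fourier multiplier satisfies $|e^{-it|\xi|^2-at|\xi|^{2s}}|=e^{-at|\xi|^{2s}}\leq 1$.

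First I would establish local well-posedness in $H^r(\R^d)$ for every $r\geq 0$. Since $|S(t)|$ is dominated pointwise in frequency by the free Schr\"odinger symbol, the same Strichartz estimates apply, and the standard Cazenave-Weissler fixed-point argument produces a unique maximal solution $u\in C([0,T^{*});H^r)$ together with the usual blow-up alternative. In addition, the damping factor yields the parabolic smoothing $\|(-\Delta)^{\sigma/2}S(t)f\|_{L^2}\lesssim (at)^{-\sigma/(2s)}\|f\|_{L^2}$ for $0\leq \sigma<2s$, so that any $H^r$-solution with $0\leq r<1$ lies in $H^1$ for every $t>0$; this reduces the case $r<1$ to $r=1$. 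For $r>1$, persistence of regularity through Strichartz estimates propagates any $H^1$-bound on $[0,T]$ to an $H^r$-bound, so the whole theorem reduces to the global $H^1$ theory.

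For the $H^1$ a priori bound I would exploit (\ref{masse}) and (\ref{energie}). The mass identity gives $\|u(t)\|_{L^2}\leq \|u_0\|_{L^2}$ unconditionally. For the energy, rewriting the cross term by duality as $\int(-\Delta)^s u\cdot f=\int(-\Delta)^{(s+1)/2}u\cdot(-\Delta)^{(s-1)/2}f$ with $f=|u|^{4/d}u$ and applying Cauchy-Schwarz, the Kato-Ponce fractional Leibniz rule, and the Gagliardo-Nirenberg inequality should yield
\[
\Big|a\,\mathrm{Im}\!\int(-\Delta)^s u\cdot|u|^{4/d}\bar u\Big|\leq \tfrac{a}{2}\|(-\Delta)^{(s+1)/2}u\|_{L^2}^2+C(\|u_0\|_{L^2})\bigl(1+\|u\|_{H^1}^{p}\bigr),
\]
for a suitable finite $p$, after absorbing the singular factor into the dissipation via Young's inequality. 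Plugging this into (\ref{energie}) and coupling it with (\ref{ZZ}) to convert energy bounds into gradient bounds (possibly after waiting for the monotone $L^2$-decay to bring $\|u(t)\|_{L^2}$ below $\|Q\|_{L^2}$, or else by running a direct Gronwall argument on $\|\nabla u\|_{L^2}^2$), one concludes that $\|\nabla u(t)\|_{L^2}$ cannot blow up in finite time, and hence $T^{*}=+\infty$.

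The main obstacle will be to carry out the cross-term estimate in all dimensions $1\leq d\leq 4$ and for every $s\geq 1$ with exponents chosen so that, after absorbing the singular factor, the remaining power of $\|u\|_{H^1}$ stays subcritical; the fact that $s+1\geq 2$ produces just enough smoothing to beat the $L^2$-critical nonlinearity $|u|^{4/d}u$ is what makes the argument work, and the borderline case $s=1$, $d=2$ already treated in \cite{passot} provides a template for the general computation.
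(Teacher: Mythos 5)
Your overall architecture (local theory via Strichartz, then an a priori bound that defeats the blow-up alternative) is sound, but the a priori bound you propose has a genuine gap, and it is not the one the paper uses. The energy route cannot close for arbitrary data in the $L^2$-critical setting: inequality \eqref{ZZ} converts an energy bound into a gradient bound only when $\|u(t)\|_{L^2}<\|Q\|_{L^2}$, and Theorem \ref{firsttheorem} makes no smallness assumption on the mass. Your fallback of ``waiting for the monotone $L^2$-decay to bring $\|u(t)\|_{L^2}$ below $\|Q\|_{L^2}$'' is not justified: \eqref{masse} only says the mass is non-increasing, with no rate, so there is no guarantee it drops below $\|Q\|_{L^2}$ before a hypothetical blow-up time. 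Your other fallback, a direct Gronwall on $\|\nabla u\|_{L^2}^2$, produces (after absorbing what can be absorbed into the dissipation) a differential inequality of the schematic form $\dot y\lesssim y^2$, which is exactly what fails to be global for critical NLS. What is missing is the observation that makes the theorem true at all: the mass balance \eqref{masse} gives for free the \emph{space-time} dissipation bound $\int_0^T\|(-\Delta)^{s/2}u\|_{L^2}^2\,dt\le \frac{1}{2a}\|u_0\|_{L^2}^2$, uniformly in $T$, and this is the quantity that must be fed into the argument (either as an integrable Gronwall weight, or as the paper does).

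The paper's actual proof bypasses the energy entirely and works at the $L^2$ level, which is also why it covers all $r\ge 0$ in one stroke. From $\|u\|_{L^\infty_TL^2}\le\|u_0\|_{L^2}$ and the space-time bound above, the hypothesis $s\ge 1$ allows the frequency interpolation $\|\nabla u\|_{L^2_TL^2}\lesssim\|(-\Delta)^{s/2}u\|_{L^2_TL^2}^{1/s}\|u\|_{L^2_TL^2}^{1-1/s}\lesssim T^{\frac12(1-\frac1s)}$; a further interpolation and the critical Sobolev embedding $H^{\frac{2d}{4+2d}}\hookrightarrow L^{\frac4d+2}$ give $\|u\|_{L^{\frac4d+2}_TL^{\frac4d+2}}\lesssim T^{\frac12(1-\frac1s)}$, which stays finite as $T\nearrow T^*$ and contradicts the blow-up criterion \eqref{oo}. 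No fractional Leibniz rule, no Gagliardo--Nirenberg, no coercivity of $E$ is needed, and no separate treatment of $r<1$ versus $r\ge 1$ via parabolic smoothing is required. If you wish to keep your $H^1$-energy framework, you must at minimum import the space-time dissipation bound to serve as the integrable factor in the Gronwall argument; as written, your proof does not establish the result for $\|u_0\|_{L^2}\ge\|Q\|_{L^2}$.
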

\begin{theorem}\label{theorem4} Let $ 0<s<1 $ and $ a>0 $. 
\begin{enumerate}
\item There exists a real number $0< \gamma=\gamma(d)\le \|Q\|_{L^2} $ such that for any initial datum $u_0 \in H^1(\R^d) $ with $\|u_0\|_{L^2} <\gamma $, the emanating solution u is global in $ H^1$ with an energy that is non increasing.
\item
 There does not exists any intial datum $u_0$, with $\|u_0\|_{L^2} \leq \|Q\|_{L^2}$, such that the solution $u$ of (\ref{NLSas})
 blows up at finite time $ T^* $  and satisfies 
$$\frac{1}{(T^*-t)^{\alpha }} \lesssim \|\nabla u(t)\|_{L^2(\mathbb{R}^{d})} \lesssim \frac{1}{(T^*-t)^{\beta }},\quad \forall \, 0<T-t\ll 1\, , 
$$
for some pair $(\alpha,\beta) $ satisfying  $0	<\beta <\frac{1}{2s}  $ and $  \beta(1+s) -1/2 <\alpha \le \beta $. 
\end{enumerate}
\end{theorem}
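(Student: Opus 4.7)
The plan is to base everything on the energy identity (1.6), which reads $\frac{d}{dt}E(u)=-a\|u\|_{\dot H^{1+s}}^{2}+a\,I(u)$ with $I(u):=\operatorname{Im}\int (-\Delta)^s u\,|u|^{4/d}\bar u\,dx$ the only sign-indefinite contribution. The central step is a careful bound on $|I(u)|$: by Cauchy--Schwarz and the sharp Gagliardo--Nirenberg inequality one gets $\||u|^{4/d}u\|_{L^2}\lesssim \|u\|_{L^2}^{(4-d)/d}\|\nabla u\|_{L^2}^{2}$; by interpolation between $L^2$ and $\dot H^{1+s}$ one has $\|u\|_{\dot H^{2s}}\le \|u\|_{L^2}^{(1-s)/(1+s)}\|u\|_{\dot H^{1+s}}^{2s/(1+s)}$; then Young's inequality absorbs the top-order piece, producing $|I(u)|\le \tfrac12\|u\|_{\dot H^{1+s}}^{2}+C\|u\|_{L^2}^{\sigma}\|\nabla u\|_{L^2}^{2(1+s)}$ with $\sigma=(4(1+s)-2ds)/d$. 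Combined with the interpolation lower bound $\|u\|_{\dot H^{1+s}}^{2}\ge \|\nabla u\|_{L^2}^{2(1+s)}/\|u\|_{L^2}^{2s}$, this yields the fundamental inequality $\frac{d}{dt}E(u(t))\le \frac{a\|\nabla u\|_{L^2}^{2(1+s)}}{\|u\|_{L^2}^{2s}}\bigl(-\tfrac12+C\|u\|_{L^2}^{4(1+s)/d}\bigr)$.

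For item (1), I set $\gamma:=\min\bigl(\|Q\|_{L^2},(2C)^{-d/(4(1+s))}\bigr)$. If $\|u_0\|_{L^2}<\gamma$, mass monotonicity (1.5) keeps $\|u(t)\|_{L^2}<\gamma$ for all times, so the bracketed factor above stays non-positive and $E$ is non-increasing. Since moreover $\|u(t)\|_{L^2}<\|Q\|_{L^2}$ strictly, the sharp Gagliardo--Nirenberg bound (1.4) yields $\|\nabla u(t)\|_{L^2}^{2}\le 2E(u_0)/(1-(\|u_0\|_{L^2}/\|Q\|_{L^2})^{4/d})$, a uniform-in-$t$ control giving global existence in $H^1$.

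For item (2), I argue by contradiction. Since $u_0\ne 0$, (1.5) shows that $\|u(t)\|_{L^2}$ is strictly decreasing, so I can pick $t_0\in(0,T^*)$ close enough to $T^*$ that both $\|u(t_0)\|_{L^2}<\|Q\|_{L^2}$ and the upper blow-up bound $\|\nabla u(\tau)\|_{L^2}\lesssim (T^*-\tau)^{-\beta}$ hold for $\tau\ge t_0$. For $t\ge t_0$, (1.4) then gives $E(u(t))\ge c\|\nabla u(t)\|_{L^2}^{2}$ with $c>0$. On the other hand, dropping the dissipation term in the fundamental inequality, using $\|u\|_{L^2}\le\|u_0\|_{L^2}$ to freeze the $\|u\|_{L^2}^{\sigma}$ factor, and integrating from $t_0$ to $t$ gives $E(u(t))\le E(u(t_0))+C\int_{t_0}^t (T^*-\tau)^{-2\beta(1+s)}\,d\tau$, which is bounded when $2\beta(1+s)<1$, logarithmic when $2\beta(1+s)=1$, and of order $(T^*-t)^{-(2\beta(1+s)-1)}$ when $2\beta(1+s)>1$. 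Confronting with the lower bound $E(u(t))\ge c(T^*-t)^{-2\alpha}$, in the first two cases $E$ stays below a bound that is incompatible with $\|\nabla u\|\to\infty$, and in the third we are forced into $2\alpha\le 2\beta(1+s)-1$, contradicting the hypothesis $\alpha>\beta(1+s)-1/2$. Note that the condition $\beta<1/(2s)$ only ensures that the interval $(\beta(1+s)-1/2,\beta]$ is non-empty.

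The main obstacle will be tuning the Cauchy--Schwarz and Young's estimates on $I(u)$ so that exactly $\|\nabla u\|^{2(1+s)}$ appears on the right-hand side: it is this exponent that makes the integral test match the critical threshold $\beta(1+s)=1/2$ and the cutoff $\alpha>\beta(1+s)-1/2$ in the statement. A secondary technical point is that (1.6) is stated for smooth solutions, so the estimates at the $H^1$ level will be justified by approximating the initial data by $H^r$ data with $r$ large and passing to the limit, using the local theory recalled after (1.2).
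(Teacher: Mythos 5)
Your argument is correct, and the two parts sit in different categories. For part (1) you follow essentially the paper's route: the same chain of Cauchy--Schwarz, Gagliardo--Nirenberg and interpolation applied to the energy dissipation identity, the only cosmetic difference being that you absorb the top-order factor $\|u\|_{\dot H^{1+s}}^{2s/(1+s)}$ by Young's inequality where the paper uses the Fourier--Cauchy--Schwarz bound $\|\nabla u\|_{L^2}^{2}\le\|u\|_{\dot H^{1+s}}^{2/(s+1)}\|u\|_{L^2}^{2s/(s+1)}$ to reconstitute exactly $\|u\|_{\dot H^{1+s}}^{2}$; both give the same smallness threshold and the conclusion via \eqref{ZZ}. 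For part (2), however, you take a genuinely different and more elementary route. The paper rescales $u(t_k)$ to fixed gradient norm $\|\nabla Q\|_{L^2}$, shows the rescaled energies tend to zero using the same ratio estimate \eqref{ksurnabla} that you use, and then invokes the Hmidi--Keraani concentration-compactness theorem (Theorem \ref{limsup}) to extract a weak profile of mass at least $\|Q\|_{L^2}$, contradicting the strict decay of the mass. You observe instead that the strict decay of the mass already gives $\|u(t_0)\|_{L^2}<\|Q\|_{L^2}$ for any $t_0>0$, so \eqref{ZZ} yields the uniform coercivity $E(u(t))\ge c\,\|\nabla u(t)\|_{L^2}^{2}$ on $[t_0,T^*)$ with $c>0$; confronting the forced growth $E(u(t))\gtrsim (T^*-t)^{-2\alpha}$ with the growth $O\bigl(1+(T^*-t)^{1-2\beta(1+s)}\bigr)$ permitted by the dissipation identity gives the contradiction directly, with no compactness or profile argument. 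This is shorter and, in my view, cleaner: the paper's compactness step also ultimately relies on the strict mass decrease, so nothing is lost. A further benefit of your version is that estimating $\operatorname{Im}\int(-\Delta)^{s}u\,|u|^{4/d}\bar u$ by interpolating $\dot H^{2s}$ between $L^2$ and $\dot H^{1+s}$ and absorbing via Young is legitimate for the whole range $0<s<1$, whereas the bound $\|(-\Delta)^{s}u\|_{L^2}\le\|\nabla u\|_{L^2}^{2s}\|u\|_{L^2}^{1-2s}$ used in the paper's Section 3 is a valid interpolation only for $s\le 1/2$. Your closing remark about justifying the energy identity for $H^1$ data by approximation matches what the paper does and should be kept.
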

\begin{remark}
 Note that, assertion (2) of Theorem \ref{theorem4} ensures that
 we do not  have any blowup in the  log-log regime for any $0<s<1$ and in the regime $\frac{1}{t}$ for any $0<s<\frac{1}{2}$, for initial data with 
 critical or subcritical mass.
\end{remark}
Acknowledgments : The first author thanks the L.M.P.T. for his kind  hospitality during the development of this work. Moreover, he would like to thank AUF for supporting this project.

\section{Local and global existence results}

In this section, we prove Theorem \ref{firsttheorem} and  part (1) of Theorem \ref{theorem4}. Theorem \ref{firsttheorem} will follow from  an a priori estimate on the critical Strichartz norm whereas  part (1) of Theorem \ref{theorem4} follows from a monotonicity of the energy. 
%Finally to prove Theorem \ref{theorem2}, we etablish an a priori estimate on the critical Strichartz norm.
\subsection{Local existence result}
Recall that the main tools to prove the local existence results for  \eqref{NLS} are the Strichartz estimates for the associated linear propagator $ e^{i\Delta t}� $. 
These Strichartz estimates reads 
$$
\|e^{i\Delta t}�\phi \|_{L^q_t L^r(\R^d)}�\lesssim \|\phi\|_{L^2(\R^d)}
$$
for any  pair $(q,r) $ satisfying   $\frac{2}{q} + \frac{d}{r} = \frac{d}{2}$ and $2 < q \leq \infty $. Such ordered pair is called an admissible pair . \\
For $ a\ge 0 $ and $s\ge 0 $ we denote by $ S_{a,s} $ the linear semi-group associated with \eqref{NLSas}, i.e.
$ S_{a,s}(t)=e^{i\Delta t -a(-\Delta)^s t}$. 
The following lemma ensures that   the linear semi-group $ S_{a,s} $  enjoys the same Strichartz estimates as $ e^{i\Delta t}� $. 
\begin{lemma}\label{LpLp}
Let $\phi \in L^{2}(\mathbb{R}^{d})$. Then  for every admissible pair $(q,r)$ it holds 
$$
\|S_{a,s}(\cdot) \phi \|_{L^q_{t>0} L^r(\R^d)}�\lesssim \|\phi\|_{L^2(\R^d)} \, .
$$
\end{lemma}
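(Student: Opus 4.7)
The plan is to factor $S_{a,s}(t)$ as the composition of the free Schr\"odinger propagator and the fractional heat semigroup, and then use a uniform $L^r$-bound on the latter to reduce to the classical Strichartz estimates for $e^{i\Delta t}$.

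Since $e^{i\Delta t}$ and $M_t := e^{-a(-\Delta)^s t}$ are both Fourier multipliers they commute, so $S_{a,s}(t)\phi = M_t\bigl(e^{i\Delta t}\phi\bigr)$. The first step is to show that $M_t$ is a contraction on $L^r(\R^d)$ uniformly in $t \geq 0$ for every $r \in [1,\infty]$. The operator $M_t$ is convolution with the kernel $K_t = \mathcal{F}^{-1}(e^{-a|\xi|^{2s}t})$, and a dilation change of variables yields the scaling identity
$$K_t(x) = (at)^{-d/(2s)}\, K_1\bigl(x/(at)^{1/(2s)}\bigr),$$
so that $\|K_t\|_{L^1(\R^d)} = \|K_1\|_{L^1(\R^d)}$ is independent of $t>0$. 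For the range $0 < s \leq 1$ that is used in the rest of the paper, Bochner's subordination formula (equivalently, the representation of $M_t$ as the transition semigroup of a symmetric $2s$-stable L\'evy process) shows that $K_1 \geq 0$ with $\int K_1 = 1$, hence $\|M_t\|_{L^r\to L^r} \leq 1$ by Young's inequality.

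Once the contraction is in hand, the conclusion is immediate: pointwise in $t>0$,
$$\|S_{a,s}(t)\phi\|_{L^r(\R^d)} \leq \|e^{i\Delta t}\phi\|_{L^r(\R^d)},$$
and raising to the $q$-th power and integrating over $t>0$, then applying the classical Strichartz estimate for the free Schr\"odinger group to the right-hand side, yields
$$\|S_{a,s}(\cdot)\phi\|_{L^q_{t>0} L^r(\R^d)} \leq \|e^{i\Delta \cdot}\phi\|_{L^q_{t>0} L^r(\R^d)} \lesssim \|\phi\|_{L^2(\R^d)}.$$
The endpoint $(q,r) = (\infty,2)$ is trivial: since $|e^{-a|\xi|^{2s}t}| \leq 1$, Plancherel immediately gives that $S_{a,s}(t)$ is an $L^2$-contraction.

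The only nontrivial ingredient is thus the uniform $L^r$-bound for the fractional heat semigroup $M_t$. I expect this to be the only real obstacle to watch out for: for $s = 1$ it reduces to the standard Gaussian heat kernel, while for $0 < s < 1$ Bochner subordination writes $M_t$ as an average of classical heat semigroups against a probability measure and so transfers $L^r$-contractivity from the ordinary heat flow. For $s > 1$, where positivity of $K_1$ may fail, one would instead need a direct smoothness/decay analysis to show $K_1 \in L^1(\R^d)$; but this lies outside what the subsequent theorems require.
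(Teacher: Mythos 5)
Your proof follows essentially the same route as the paper: write $S_{a,s}(t)\phi$ as the convolution of the fractional heat kernel with $e^{it\Delta}\phi$, use the dilation identity to reduce the $L^1$ bound on the kernel to $t=1$, and conclude by Young's inequality plus the classical Strichartz estimates. The only difference is how $K_1\in L^1$ is justified: the paper simply cites Lemma 2.1 of Miao--Yuan--Zhang, which covers all $s>0$, whereas you give a self-contained subordination argument valid for $0<s\le 1$. One correction: your closing remark that the case $s>1$ ``lies outside what the subsequent theorems require'' is not accurate, since Theorem \ref{firsttheorem} asserts global well-posedness for all $s\ge 1$ and its proof rests on the local theory of Proposition \ref{prop1}, hence on this lemma; for $s>1$ you would indeed need the direct decay analysis you allude to (or the cited reference), so that part of your argument should not be waved away.
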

\begin{proof}
Setting, for any $ t\ge 0 $,  $\displaystyle{G_{a,s}(t,x) = \int e^{-ix\xi} e^{-at|\xi|^{2s}} d\xi}$, it holds 
$$
S_{a,s}(t)\varphi =G_{a,s}(t,\cdot) \ast e^{it\Delta}\varphi , \quad \forall t\ge 0 \, .
$$
Noticing that  $\|G_{a,s}(t,.)\|_{L^1} = \|G_{1,s}(1,.)\|_{L^1}$ and that, according to 
  Lemma 2.1 in \cite{Zhang}, $G_{1,s}(1,.) \in L^{1}(\mathbb{R}^{d})$, we get 
$$
\|S_{a,s}(t) \phi)\|_{L^{p}_{t>0} L^q(\mathbb{R}^{d})} = \|G_{a,s}(t,.)\ast e^{it\Delta}�(\phi)\|_{L^{p}_{t>0}L^{q}(\mathbb{R}^{d})} \lesssim \|e^{it\Delta}�\phi\|_{L_t^{p}L^q(\mathbb{R}^{d})} 
\lesssim \|\phi\|_{L^2} \, .
$$
\end{proof}
With Lemma \ref{LpLp} in hand, it is not too hard to check that  the local existence results for equation \eqref{NLS} (see for instance \cite{CW2} and \cite{Cazenave1}) also holds for  \eqref{NLSas} with $ a\ge 0 $ and $ s\ge 0 $. More precisely, we have the following statement: 
\begin{proposition}\label{prop1}
Let $ a\ge 0$, $ s\ge 0 $ and $ u_0 \in H^s(\R^d) $ with $ d\in\{1,2,3,4\}$. There exists $ T>0 $ and a unique solution $ u\in C([0,T]; H^s)\cap L^{\frac{4}{d}+2}_T  L^{\frac{4}{d}+2} $ to \eqref{NLSas} emanating from $ u_0 $. In addition, there exists a neighborhood $ {\mathcal V}_{u_0} $ of $ u_0 $ in $ H^s $ such that the associated solution map is continuous from   $ {\mathcal V}_{u_0} $ into $C([0,T]; H^s)\cap L^{\frac{4}{d}+2}_T  L^{\frac{4}{d}+2} $. \\
Finally, let $ T^* $ be the maximal time of existence of the solution $ u $ in $ H^s(\R^d) $, then 
\begin{equation} \label{oo}
T^*<\infty \Longrightarrow \|u\|_{L^{\frac{4}{d}+2}_T  L^{\frac{4}{d}+2} } =+\infty \; .
\end{equation}
 \end{proposition}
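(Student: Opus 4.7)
The plan is to mirror the classical Cazenave--Weissler fixed-point proof for the $L^2$-critical NLS, which is made possible here by the fact that Lemma~\ref{LpLp} grants the same Strichartz estimates for $S_{a,s}$ as for the free Schr\"odinger group. I would first recast \eqref{NLSas} in Duhamel form
\[
u(t) = S_{a,s}(t)u_0 + i\int_0^t S_{a,s}(t-\tau)\bigl(|u|^{4/d}u\bigr)(\tau)\,d\tau =: \Phi(u)(t),
\]
and search for a fixed point of $\Phi$ in a closed ball of $X_T := C([0,T];L^2(\R^d))\cap L^{q}_T L^{q}(\R^d)$ with $q=\tfrac{4}{d}+2$, the $L^2$-critical admissible exponent (one checks $\tfrac{2}{q}+\tfrac{d}{q}=\tfrac{d}{2}$).

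The key bound is H\"older's inequality applied precisely at criticality,
\[
\bigl\||u|^{4/d}u\bigr\|_{L^{q'}_T L^{q'}} \leq \|u\|_{L^q_T L^q}^{\,1+4/d},
\]
which, combined with Lemma~\ref{LpLp} (both on the linear term and, via its dual form, on the Duhamel term) yields
\[
\|\Phi(u)\|_{X_T}\lesssim \|u_0\|_{L^2} + \|u\|_{X_T}^{\,1+4/d},
\]
together with an analogous Lipschitz estimate for $\Phi(u)-\Phi(v)$. Since $\|S_{a,s}(\cdot)u_0\|_{L^q_t L^q(\R^+)}$ is finite by Lemma~\ref{LpLp}, its restriction to $[0,T]$ tends to $0$ as $T\to 0^+$ by dominated convergence, which I would use to make $\Phi$ a contraction on a small ball of $X_T$. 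This produces a unique $L^2$-solution and the continuity of the data-to-solution map on a neighborhood ${\mathcal V}_{u_0}$.

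To upgrade the regularity from $L^2$ to $H^s$, I would apply $\langle\nabla\rangle^s$ to the Duhamel formula and control $\|\langle\nabla\rangle^s(|u|^{4/d}u)\|_{L^{q'}_T L^{q'}}$ by the Kato--Ponce / fractional chain rule by $\|u\|_{L^q_T L^q}^{4/d}\,\|\langle\nabla\rangle^s u\|_{L^q_T L^q}$; since the first factor has already been made small, the same fixed-point scheme gives persistence of the $H^s$ regularity on the same time interval. For non-integer $s$, this fractional chain rule step is the only point requiring real care, but it is by now a standard ingredient.

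Finally, the blow-up alternative \eqref{oo} follows from the usual continuation argument: if we had $T^*<\infty$ with $\|u\|_{L^{q}_{T^*}L^{q}}<\infty$, then $[0,T^*)$ could be split into finitely many subintervals on which this norm is as small as desired, and on each such subinterval the fixed-point scheme would provide a uniform existence time depending only on this smallness (and on $a$, $s$, $d$), thereby extending $u$ past $T^*$ and contradicting maximality. I do not anticipate a genuine obstacle here: every step is a direct transcription of the classical theory, enabled by Lemma~\ref{LpLp}, and the only delicate bookkeeping is in the fractional chain rule for non-integer $s$.
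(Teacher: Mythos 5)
Your proposal is correct and is precisely the argument the paper has in mind: the paper gives no detailed proof of Proposition \ref{prop1}, merely observing that once Lemma \ref{LpLp} supplies the same Strichartz estimates for $S_{a,s}$ as for $e^{it\Delta}$, the classical Cazenave--Weissler fixed-point scheme (Duhamel formula, contraction in $C([0,T];L^2)\cap L^{\frac{4}{d}+2}_T L^{\frac{4}{d}+2}$ using smallness of the free evolution, persistence of regularity, and the standard continuation argument for the blow-up alternative) carries over verbatim. Your write-up fills in exactly those steps, so it matches the paper's approach.
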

\subsection{Proof of Theorem \ref{firsttheorem}}
Let $ u\in C([0,T]; L^2(\R^d) $ be the solution emanating from some initial datum $ u_0\in L^2(\R^d)  $. We have the following a priori estimates:
\begin{lemma}\label{prop1}
Let $ u \in C([0,T];L^2(\R^d)) $ be the  solution of (\ref{NLSas}) emanating from $ u_0\in L^2(\R^d) $. Then
\begin{equation}
\label{ff}
\|u\|_{L^{\infty}_T L^{2}} \leq \|u_0\|_{L^2}�~~ and ~~\|(-\Delta)^{\frac{s}{2}} u\|_{{L^2_T}{L^2}} \leq \frac{1}{\sqrt{2a}} \|u_0\|_{L^2} .
\end{equation}
\end{lemma}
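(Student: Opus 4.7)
The plan is to establish the fundamental $L^2$ balance identity
\begin{equation*}
\|u(t)\|_{L^2}^2 + 2a \int_0^t \|(-\Delta)^{s/2} u(\tau)\|_{L^2}^2 \, d\tau = \|u_0\|_{L^2}^2,
\end{equation*}
which is essentially identity \eqref{masse} from the introduction (the factor $2a$ here being consistent with the constant $1/\sqrt{2a}$ appearing in the statement). Once this identity is in hand, the first bound follows by discarding the nonnegative integral term, and the second by discarding $\|u(t)\|_{L^2}^2$ and dividing by $2a$ before taking a square root.

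To derive the identity, I first work formally with a smooth solution. Multiplying \eqref{NLSas} by $\bar u$, integrating over $\R^d$, and taking the imaginary part, the contributions of the nonlinearity $|u|^{4/d} u\,\bar u = |u|^{4/d+2}$ and of the Laplacian $\int \Delta u \, \bar u = -\int |\nabla u|^2$ are real and drop out. The time derivative term $i u_t \bar u$ contributes $\operatorname{Re}(u_t \bar u) = \tfrac{1}{2}\partial_t |u|^2$, and the dissipative term $i a (-\Delta)^s u \cdot \bar u$ produces, by Parseval, $a \|(-\Delta)^{s/2} u\|_{L^2}^2$. Putting these together yields the pointwise-in-time identity
\begin{equation*}
\frac{1}{2} \frac{d}{dt} \|u(t)\|_{L^2}^2 + a \|(-\Delta)^{s/2} u(t)\|_{L^2}^2 = 0,
\end{equation*}
and integration in time delivers the balance identity above.

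The main obstacle is justifying this computation when $u$ is only an $L^2$ solution, for which neither $(-\Delta)^s u$ nor $|u|^{4/d} u$ is a priori meaningful. I would handle this by a standard density argument: approximate $u_0$ in $L^2$ by a sequence $u_0^n$ of smooth, rapidly decaying functions, so that the local existence proposition above applied at sufficiently high regularity yields smooth solutions $u^n$ on a common time interval on which the formal computation is rigorous. The $L^2$ continuity of the flow, which follows from the Strichartz bound of Lemma \ref{LpLp} combined with a contraction argument in a suitable Strichartz space, ensures $u^n \to u$ in $C([0,T]; L^2)$, so one may pass to the limit in $\|u^n(t)\|_{L^2}^2$; for the dissipation integral, Fatou's lemma provides the required lower semicontinuity, while the matching upper bound is supplied by the balance identity itself in the limit. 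This yields the identity for $u$, and hence the two desired bounds.
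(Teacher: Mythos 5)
Your proof is correct and follows essentially the same route as the paper: establish the mass balance identity for smooth solutions by pairing the equation with $\bar u$ and taking the imaginary part, then pass to general $L^2$ data by density and continuity of the flow. You even supply more detail than the paper does (the paper simply invokes \eqref{masse} and the approximation in one line), and you correctly use the factor $2a$ in the dissipation term, which matches the constant $1/\sqrt{2a}$ in the statement and in the paper's own computation, even though \eqref{masse} as displayed in the introduction carries a factor $a$.
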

\begin{proof}Assume first that $ u_0 \in H^{\infty} (\R^d) $.  
Then (\ref{masse}) ensures that  the mass is decreasing  as soon as $u$ is not the null solution and  (\ref{masse}) leads to 
$$
\displaystyle{\int_0^T\|(-\Delta)^{\frac{s}{2}}u(t)\|^2_{L^2}\, dt  = -\frac{1}{2a}(\|u(T)\|_{L^2}^2 - \|u_0\|_{L^2}^2)}
\le   \frac{1}{2a }\|u_0\|_{L^2}^2.$$
This proves \eqref{ff} �for smooth solutions. The result for $ u_0 \in L^2(\R^d) $ follows by approximating $ u_0 $ in $ L^2 $ by a smooth sequence $ (u_{0,n})\subset H^\infty(\R^d) $.
\end{proof}
Note that  the first estimate in \eqref{ff} implies that $ \|u\|_{L^2_T L^{2}} \leq T^{1/2} \|u_0\|_{L^2}�$ and thus by interpolation:
\begin{equation}\label{est}
\|\nabla u\|_{L^2_T L^2}\lesssim \|(-\Delta)^{\frac{s}{2}}u\|^{\frac{1}{s}}_{L^2_T L^2}\|u\|^{1-\frac{1}{s}}_{L^{2_T }L^{2}} \lesssim T^{\frac{1}{2}(1-\frac{1}{s})}
\end{equation}
Interpolating now between  (\ref{est}) and the first estimate of \eqref{ff} we get 
$$\|u\|_{L^{\frac{4}{d}+2}_T H^{\frac{2d}{4+2d}}} \lesssim T^{\frac{1}{2}(1-\frac{1}{s})}$$
and the embedding  $H^{{\frac{2d}{4+2d}}}(\R^d) \hookrightarrow L^{\frac{4}{d}+2}(\R^d)$ ensures that 
$$ \|u\|_{L^{\frac{4}{d}+2}_TL^{\frac{4}{d}+2}} \lesssim \|u\|_{L^{\frac{4}{d}+2}_T H^{\frac{2d}{4+2d}}} \lesssim T^{\frac{1}{2}(1-\frac{1}{s})} \, .$$
Denoting by $ T^* $ the  maximal time of existence of $ u $ in $ L^2(\R^d) $ and letting $ T $ tends to $ T^* $, this contradicts \eqref{oo} whenever $ T^* $ is finite.
 This proves that the solutions are global in $ H^r(\R^d) $. 
\subsection{Proof  of Assertion 1 of  Theorem \ref{theorem4}}
Note that  the global existence for any $ u_0\in L^2(\R^d) $  with $ \|u_0\|_{L^2} $ small enough can be proven, as for the critical NLS equation,  directly by a fixed point argument thanks to Lemma \ref{LpLp}.  This ensures the global existence in $ H^r(\R^d) $, $r\ge 0 $, under the same smallness condition 
 on $ \|u_0\|_{L^2} $. We will not invoke this fact here  and we will directly prove 
Assertion 1 of Theorem \ref{theorem4} 	by combining \eqref{ZZ} and a  monotony result on  $t\mapsto  E(u(t)) $. 
To do this, we will  work with smooth solutions and then get the result for $ H^1$-solutions by continuity with respect initial data. 

So, let  $ u\in C([0,T]; H^\infty(\R^d)) $ be  a solution to \eqref{NLSas} emanating from $u_0\in H^\infty(\R^d) $. 
Then it holds 
$$\frac{d}{dt}E(u(t) =  \displaystyle{-a\int |(-\Delta)^{\frac{s+1}{2}}u(t)|^2 +a Im \int (-\Delta)^s u(t)\,  |u(t)|^{\frac{4}{d}}\overline{u(t)}}$$
and H\"older inequalities in physical space and in Fourier space lead to
$$
\Bigl| \int (-\Delta)^s u \, |u|^{\frac{4}{d}}\overline{u} \Bigr| \leq \|(-\Delta)^s u \|_{L^2}\|u\|_{L^{\frac{8}{d}}+2}^{\frac{4}{d} + 1}
$$
with
$$
\|(-\Delta)^s u \|_{L^2} \leq \|(-\Delta)^{\frac{s+1}{2}}u\|_{L^2}^{\frac{2s}{s+1}}\|u\|_{L^2}^{\frac{1-s}{1+s}}\, .
$$
Let us recall the following   Gagliardo-Nirenberg inequality \footnote{It is proven in \cite{weinst} that the constant $C_d $  is related for $d=1,2,3 $ to the $L^2$-norm of the ground state solution of 
   $ 2 \Delta \psi -(\frac{4}{d}-1) \psi + \psi^{\frac{8}{d}+1}=0$. }
$$
\|u\|_{L^{\frac{8}{d}}+2}^{\frac{8}{d} + 2} \leq C_d^{\frac{8}{d}+2} \|\nabla u\|_{L^2}^{4}\|u\|^{\frac{8}{d}-2}_{L^2}\, .
$$
 This estimate together with Cauchy-Schwarz inequality (in Fourier space)
 $$
\|\nabla u\|^{2}_{L^2} \leq \|(-\Delta)^{\frac{s+1}{2}} u \|_{L^2}^{\frac{2}{s+1}}\|u\|_{L^2}^{\frac{2s}{s+1}}
$$
lead to 
$$
\|u\|_{L^{\frac{8}{d}}+2}^{\frac{4}{d} + 1} \leq C^{\frac{4}{d} + 1}\|(-\Delta)^{\frac{s+1}{2}} u \|_{L^2}^{\frac{2}{s+1}}\|u\|_{L^2}^{\frac{2s}{s+1}}\| u\|_{L^2}^{\frac{4-d}{d}}\, .
$$
Combining the above estimates we eventually obtain
$$
\frac{d}{dt}E(u(t)) \leq a \|(-\Delta)^{\frac{s+1}{2}}u\|_{L^2}^2(C^{\frac{4}{d} + 1}\|u\|_{L^2}^{4/d} - 1)
$$
which together with \eqref{ff} implies that $ E(u(t)) $ is not increasing for $ \|u_0\|_{L^2} \le C^{1+\frac{d}{4}} $.

\section{Proof of Assertion 2  of Theorem \ref{theorem4}}
Special solutions play a fundamental role for the description of the dynamics of (NLS). They are the solitary waves of the form $u(t, x) =\exp(it)Q(x)$, where $Q$ the unique positive radial solution to 
 \begin{equation}\label{ellip}
 \Delta Q + Q|Q|^{\frac{4}{d}} = Q.
 \end{equation}
 The
pseudo-conformal transformation applied to the ``stationary'' solution $e^{it}Q(x)$
yields an explicit solution for (NLS)
$$
S(t,x) = \frac{1}{\mid t \mid^{\frac{d}{2}}} Q(\frac{x}{t})e^{-i \frac{\mid x \mid^2}{4t} + \frac{i}{t}}
$$
which blows up at $T^* = 0$.\\
 Note that 
\begin{equation}\label{nablaS}
\|S(t)\|_{L^2} = \|Q\|_{L^2} ~~\text{and}~~ \|\nabla S(t)\|_{L^2}\sim \frac{1}{t}
\end{equation}
 It turns out that
$S(t)$ is the unique minimal mass blow-up solution in $H^1$  up to the symmetries of the equation ( see \cite{Merleseul}).

A known lower bound ( see \cite{Merle4}) on the blow-up rate for (NLS) is 
\begin{equation}\label{lowerNLSap}
\|\nabla u(t)\|_{L^2} \geq \frac{C(u_0)}{\sqrt{T-t}}.
\end{equation}
 %and that the upper bound is 
%\begin{equation}\label{upperNLSap}
%|\nabla u(t)|_{L^2} \leq \frac{C*}{T-t}
%\end{equation}
Note that this blow-up rate is strictly lower than the one of $S(t)$ given by (\ref{nablaS}) and of the log-log law given by (\ref{speed}).\\
To prove assertion 2 of Theorem \ref{theorem4}, we will need the following result ( see \cite{Hmidi}) :\\
\begin{theorem}\label{limsup}
 Let $(v_{n})_{n}$ be a bounded family of $H^1(\mathbb{R}^{d})$, such that:
 \begin{equation}
 \limsup_{n \rightarrow +\infty}\left\|\nabla v_{n}\right\|_{L^2(\mathbb{R}^{d})} \leq M \quad and \quad \limsup_{n \rightarrow +\infty}\left\|v_{n}\right\|_{L^{\frac{4}{d} + 2}} \geq m.
 \end{equation}
 Then, there exists $(x_{n})_{n} \subset \mathbb{R}^{d}$ such that:
 \begin{equation}
 v_{n}(\cdot + x_{n}) \rightharpoonup V \quad weakly, \nonumber
 \end{equation}
 with $\left\|V\right\|_{L^2(\mathbb{R}^{d})} \geq (\frac{d}{d+4})^{\frac{d}{4}}\frac{m^{\frac{d}{2}+1} + 1}{M^{\frac{d}{2}}}\left\|Q\right\|_{L^2(\mathbb{R}^{d})}$.
 \end{theorem}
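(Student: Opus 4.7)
The strategy is to combine the $H^{1}(\R^{d})$ profile decomposition (in the spirit of G\'erard / Hmidi--Keraani) with the sharp Gagliardo--Nirenberg inequality. Up to extraction of a subsequence, one writes, for every integer $J \ge 1$,
\begin{equation*}
v_{n}(x) \;=\; \sum_{j=1}^{J} V^{j}(x - x_{n}^{j}) \;+\; r_{n}^{J}(x),
\end{equation*}
where the translations are pairwise orthogonal in the sense that $|x_{n}^{j} - x_{n}^{k}| \to \infty$ for $j \neq k$, where each profile $V^{j}$ is the weak $H^{1}$-limit of $v_{n}(\,\cdot\, + x_{n}^{j})$, and where the remainder satisfies $\lim_{J\to\infty}\limsup_{n}\|r_{n}^{J}\|_{L^{4/d+2}} = 0$. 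The orthogonality yields the Pythagorean inequalities
\begin{equation*}
\sum_{j=1}^{J} \|\nabla V^{j}\|_{L^{2}}^{2} \;\le\; \liminf_{n} \|\nabla v_{n}\|_{L^{2}}^{2} \;\le\; M^{2}, \qquad \sum_{j=1}^{J} \|V^{j}\|_{L^{2}}^{2} \;\le\; \liminf_{n} \|v_{n}\|_{L^{2}}^{2},
\end{equation*}
so the $L^{2}$-masses of the profiles form a summable sequence and in particular admit a largest element.

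The orthogonality also gives the asymptotic decoupling
\begin{equation*}
\limsup_{n \to \infty} \|v_{n}\|_{L^{4/d+2}}^{4/d+2} \;=\; \sum_{j \ge 1} \|V^{j}\|_{L^{4/d+2}}^{4/d+2},
\end{equation*}
which combined with the assumption $\limsup_{n}\|v_{n}\|_{L^{4/d+2}} \ge m$ yields $\sum_{j} \|V^{j}\|_{L^{4/d+2}}^{4/d+2} \ge m^{4/d+2}$. Applying the sharp Gagliardo--Nirenberg inequality
\begin{equation*}
\|u\|_{L^{4/d+2}}^{4/d+2} \;\le\; \frac{d+2}{d\,\|Q\|_{L^{2}}^{4/d}}\, \|\nabla u\|_{L^{2}}^{2}\, \|u\|_{L^{2}}^{4/d}
\end{equation*}
to each $V^{j}$, summing in $j$, and setting $A := \sup_{j}\|V^{j}\|_{L^{2}}$ so that $\|V^{j}\|_{L^{2}}^{4/d} \le A^{4/d}$, one finds
\begin{equation*}
m^{4/d+2} \;\le\; \frac{d+2}{d\,\|Q\|_{L^{2}}^{4/d}}\, A^{4/d}\, M^{2},
\end{equation*}
which rearranges into a lower bound of the form $A \ge c_{d}\, m^{1+d/2}\, M^{-d/2}\, \|Q\|_{L^{2}}$ with an explicit dimensional constant $c_{d}>0$. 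Taking $V := V^{j_{0}}$ and $x_{n} := x_{n}^{j_{0}}$ for any index $j_{0}$ attaining the supremum finishes the argument.

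The real work sits in the profile decomposition itself: one iteratively extracts bubbles using a refined Sobolev-type embedding that bounds the $L^{4/d+2}$ norm by an interpolated Besov norm, then verifies by induction that the critical norm of the remainder decays as the number of extracted bubbles grows. Once this machinery is granted, the conclusion is a single application of the sharp Gagliardo--Nirenberg inequality. The whole point of invoking the decomposition, rather than applying sharp Gagliardo--Nirenberg directly to $v_{n}$, is that the $L^{2}$-masses of distinct profiles do not interact, so that the ratio $m^{1+d/2}/M^{d/2}$ shows up as a lower bound for the mass of a \emph{single} bubble, which is exactly what the statement requires.
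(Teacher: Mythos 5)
The paper does not prove Theorem \ref{limsup}; it is quoted (with typographical errors) from Hmidi--Keraani \cite{Hmidi}, and your argument is essentially the proof given there: the $H^1$ profile decomposition together with the almost-orthogonality of the $L^2$ and $\dot H^1$ norms and the decoupling of the $L^{\frac{4}{d}+2}$ norm, followed by one application of the sharp Gagliardo--Nirenberg inequality to the bubble of largest mass (which exists since the masses are square-summable; one should also first pass to a subsequence realizing the $\limsup$ of $\|v_n\|_{L^{\frac{4}{d}+2}}$ before decomposing, and the weak convergence in the conclusion is of course only along a subsequence). Your computation yields
$\left\|V\right\|_{L^2} \ge \bigl(\tfrac{d}{d+2}\bigr)^{\frac{d}{4}} m^{\frac{d}{2}+1} M^{-\frac{d}{2}} \left\|Q\right\|_{L^2}$,
which is the constant in the original reference; the version printed in the statement above, with $\tfrac{d}{d+4}$ in place of $\tfrac{d}{d+2}$ and a stray ``$+1$'' in the numerator, is a mistranscription (the ``$+1$'' would even make the claim false as $m\to 0$). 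Granting the profile decomposition as a black box is reasonable here, since that is exactly the technical content of the cited work.
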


Suppose that there exist an initial data $u_0$ with $\|u_0\|_{L^2} \leq \|Q\|_{L^2}$ , such that the corresponding solution $u(t)$ blows up
   at time $ T>0 $ with the following behavior: 
    \begin{equation}\label{law}
    \frac{1}{(T-t)^{\alpha }} \lesssim \|\nabla u(t)\|_{L^2(\mathbb{R}^{d})} \lesssim \frac{1}{(T-t)^{\beta}}, \quad \forall t\in [0,T[ ,
    \end{equation}
where $ \beta >0 $  and  $ \alpha \ge \beta $ satisfies $\alpha > \beta(1+s)-1/2)$.\\
Recalling  that
\begin{equation}\label{energie}
\displaystyle{E(u(t)) = E(u_{0}) - a\int_{0}^{t}K(u(\tau))d\tau, \quad t \in [0,T[,}
\end{equation}
with $K(u(t))= \displaystyle{\int |(-\Delta)^{\frac{s+1}{2}}u|^2 - Im \int (-\Delta)^s u |u|^{\frac{4}{d}}\overline{u}}$, we obtain that 
$$\displaystyle{E(u(t))  \lesssim E(u_0) +\Bigl|� \int_0^t (-\Delta)^s( u) |u|^{\frac{4}{d}}\overline{u}dx\Bigr| 
\lesssim E(u_0) + \int_0^t \left\|(-\Delta)^s(u)\right\|_{L^2}\|u\|_{L^{\frac{8}{d}+2}}^{\frac{4}{d}+1}}
$$
This last estimate  together with 
$$ \|u\|_{L^{\frac{8}{d}+2}}^{\frac{4}{d}+1} \lesssim  \|\nabla u\|_{L^2}^{2}\|u_0\|_{L^2}^{\frac{4-d}{d}}
\quad  \mbox{ and }\quad \|(-\Delta)^s(u)\|_{L^2} \leq \|\nabla u\|_{L^2}^{2s}\|u\|_{L^2}^{1-2s} \lesssim \|\nabla u \|^{2s}_{L^2}$$
yield
\begin{equation}\label{cc}
E(u(t)) \lesssim   E(u_0) +\int_0^t   \|\nabla u\|_{L^2}^{2+2s}(\tau) \, d\tau .
\end{equation}
Note that assumption \eqref{law} ensures  that 
 %$\displaystyle{\lim_{t \rightarrow T}\left\|\nabla u(t)\right\|_{L^2(\mathbb{R}^{d})}} = +\infty$, 
\begin{equation}\label{ksurnabla}
0 \le \frac{\displaystyle{\int_{0}^{t}}
\left\|\nabla u(\tau)\right\|_{L^2}^{2+2s}d\tau}{\left\|\nabla u(t)\right\|_{L^2(\mathbb{R}^{d})}^{2}} \lesssim   (T-t)^{-2\beta(1+s)+1+2\alpha}
\rightarrow 0 \; \mbox{� as  }�t\nearrow T ,
\end{equation}

Now, let
 $$\rho(t) = \frac{\left\|\nabla Q\right\|_{L^2(\mathbb{R}^{d})}}{\left\|\nabla u(t)\right\|_{L^2(\mathbb{R}^{d})}} \quad \text{and} \quad v(t,x)=\rho^{\frac{d}{2}}u(t,\rho x)$$
and let $(t_{k})_{k}$ be a sequence of positive times such that $t_{k} \nearrow T$. We set 
 $\rho_{k} = \rho(t_{k})$ and $  v_{k} = v(t_{k},.)$. The family $(v_{k})_{k}$ satisfies 
$$\left\|v_{k}\right\|_{L^2(\mathbb{R}^{d})}= \left\|u(t_k,\cdot)\right\|_{L^2(\mathbb{R}^{d})} < \left\|u_{0}\right\|_{L^2(\mathbb{R}^{d})}\leq \left\|Q\right\|_{L^2(\mathbb{R}^{d})}
\quad \text{and} \quad \left\|\nabla v_{k}\right\|_{L^2(\mathbb{R}^{d})} = \left\|\nabla Q\right\|_{L^2(\mathbb{R}^{d})}.$$
\bigskip
The above estimate on $ \left\|v_{k}\right\|_{L^2} $ and \eqref{cc} lead to
$$
 0< \frac{1}{2}(\int |\nabla v_{k}|^2)\bigg(1 - \big(\frac{\int |v_k|^2}{\int |Q|^2}\big)^{2}\bigg)\leq 
E(v_{k}) = \rho^2_{k} E(u(t)) \lesssim \rho^2_{k} E(u_{0}) + \rho^2_{k}  \int_{0}^{t_{k}}\left\|\nabla u(\tau)\right\|_{L^2}^{2+2s}d\tau�
$$
which, together with \eqref{ksurnabla}, ensures that $\displaystyle{\lim_{k \longrightarrow +\infty}} E(v_{k}) =0$. This forces 
\begin{equation}\label{vk ver Q}
\left\|v_{k}\right\|_{L^{\frac{4}{d}+2}}^{\frac{4}{d}+2 } \rightarrow  \frac{d+2}{d}\left\|\nabla v_k\right\|_{L^2(\mathbb{R}^{d})}^{2}=\frac{d+2}{d}\left\|\nabla Q\right\|_{L^2(\mathbb{R}^{d})}^{2}
\end{equation}
and thus the  family $(v_{k})_{k}$ satisfies the hypotheses of Theorem \ref{limsup} with \\
$$m^{\frac{4}{d}+2} = \frac{d+2}{d}\left\|\nabla Q\right\|_{L^2(\mathbb{R}^{d})}^{2} \quad \text{and} \quad M = 
\left\|\nabla Q\right\|_{L^2(\mathbb{R}^{d})}\, .$$
\bigskip
Hence,  there exists a family $(x_{k})_{k} \subset \mathbb{R}$ and a profile $V \in H^{1}(\mathbb{R})$ with $\left\|V\right\|_{L^2(\mathbb{R}^{d})} \geq \left\|Q\right\|_{L^2(\mathbb{R}^{d})}$, such that,
\begin{equation}\label{convergencefaible}
\displaystyle{\rho^{\frac{d}{2}}_{k}u(t_{k}, \rho_{k}\cdot +  x_{k}) \rightharpoonup V \in H^{1} \quad \text{weakly}.}
\end{equation}
Using (\ref{convergencefaible}), $\forall A \geq 0$
\begin{equation}
 \displaystyle{\liminf_{n\to +\infty}\int_{B(0,A)}\rho_{n}^d|u(t_{n},\rho_{n}x+x_{n})|^{2}dx\geq \int_{B(0,A)}|V|^{2}dx.}\nonumber
 \end{equation}
But, since $\displaystyle \lim_{n\to +\infty}\rho_{n}=0$, \;  $\rho_{n}A < 1$ for $ n $ large enough and thus
  
  $$
  \displaystyle{\liminf_{n\to +\infty}\sup_{y\in\mathbb{R}}\int_{|x-y|\leq 1}|u(t_{n},x)|^{2}dx \geq \liminf_{n\to +\infty}
 \int_{|x-x_n| \leq \rho_{n}A}|u(t_{n},x)|^{2}dx \geq \int_{|x|\leq A}|V|^{2}dx.}
  $$
Since this it is true for all $A > 0$ we obtain that 
$$
 \|u_0\|_{L^2}^2>  \liminf_{n\to +\infty}\sup_{y\in\mathbb{R}}\int_{|x-y|\leq 1}|u(t_{n},x)|^{2}dx \ge \|Q\|_{L^2}^2
$$
which contradicts the assumption $ \|u_0\|_{L^2} \le \|Q\|_{L^2} $ and the desired result is proven.

 \section{Blow up solution.}
In this section, we prove the existence of the explosive solutions in the case $ 0<s<1$. 
\begin{theorem}\label{theorem 3} Let $0<s<1$.
 There exist  a set of initial data $\Omega$ in $H^1$, such that  for any $0 < a < a_0$ with $a_0=a_0(s)$ small enough, the emanating solution $u(t)$  to (\ref{NLSas}) blows up in finite time in the log-log regime.
\end{theorem}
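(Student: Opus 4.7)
The plan is to view the fractional dissipation $ia(-\Delta)^{s} u$ as a small perturbation of the focusing NLS equation and to adapt the Merle--Raphaël log-log blow-up construction \cite{MerleRaphael1,Merle6}. I would take $\Omega$ to be (a slight thickening of) the Merle--Raphaël trapping set $\mathcal{P}$, i.e.\ small $H^{1}$-perturbations of $Q$ with mass slightly above $\|Q\|_{L^{2}}$ and a small negative energy, satisfying the usual orthogonality conditions. The goal is to show that if $a\le a_{0}(s)$ is small enough, the solutions of \eqref{NLSas} emanating from $\Omega$ stay in $\mathcal{P}$ along the flow and concentrate at the log-log rate \eqref{speed}.

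\textbf{Modulation decomposition and rescaled equations.} As in \cite{MerleRaphael1}, write
\[
u(t,x) = \frac{e^{i\gamma(t)}}{\lambda(t)^{d/2}}\bigl(Q_{b(t)} + \varepsilon\bigr)\!\Bigl(t,\frac{x - x(t)}{\lambda(t)}\Bigr),
\]
where $Q_{b}$ is the Merle--Raphaël approximate self-similar profile and $\varepsilon$ satisfies four orthogonality conditions which fix the modulation parameters $(\lambda,\gamma,x,b)$. In the renormalized variables $y=(x-x(t))/\lambda(t)$ and $s_{\lambda}$ with $ds_{\lambda}/dt=1/\lambda^{2}$, the equation for $\varepsilon$ picks up, compared with the NLS case, an extra source term proportional to $a\lambda^{2-2s}(-\Delta)^{s}(Q_{b}+\varepsilon)$ coming from the dissipation. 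Since $0<s<1$ gives $2-2s>0$ and $\lambda(t)\to 0$ near the blow-up time, this forcing is uniformly small once $a$ is small, so each modulation equation differs from the NLS one by terms of size $O(a\lambda^{2-2s})$ times weighted Sobolev norms of $Q_{b}$ and $\varepsilon$.

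\textbf{Monotonicity estimates.} The heart of the proof is to propagate the Merle--Raphaël monotonicity structure (almost-conservation of mass and energy, local virial identity, monotone Lyapunov functional $\mathcal{J}$) in the presence of the damping. Combining \eqref{masse} with the interpolation $\|(-\Delta)^{s/2}u\|_{L^{2}}\lesssim \|\nabla u\|_{L^{2}}^{s}\|u\|_{L^{2}}^{1-s}$ and the log-log ansatz $\|\nabla u\|_{L^{2}}^{2}\sim \log|\log(T-t)|/(T-t)$,
\[
0 \le \|u_{0}\|_{L^{2}}^{2} - \|u(t)\|_{L^{2}}^{2} \lesssim a\,\|u_{0}\|_{L^{2}}^{2(1-s)}\int_{0}^{t}\|\nabla u(\tau)\|_{L^{2}}^{2s}\,d\tau ,
\]
and the right-hand side is finite for $0<s<1$ by the integrability of $(T-\tau)^{-s}$; hence the mass drift is controlled by $a$. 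For the energy, the key observation is that $\operatorname{Im}\int(-\Delta)^{s}u\,|u|^{4/d}\overline{u}$ vanishes identically on the (real, up to phase) ground-state profile, so on the modulated solution it is of size $\|\varepsilon\|_{H^{1}}\lambda^{-2(s+1)}$; integrated against the log-log law this keeps $|E(u(t))-E(u_{0})|$ small provided $a$ is small. The same scheme, applied to the local virial identity, shows that the mass in the blow-up zone is almost non-increasing up to errors of size $O(a\lambda^{2-2s})$, which suffices to close the Merle--Raphaël bootstrap and yields $\lambda(t)\sim \sqrt{(T-t)/\log|\log(T-t)|}$.

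\textbf{Main obstacle.} The hardest point is the non-local character of $(-\Delta)^{s}$: unlike the pure damping case $s=0$ treated in \cite{Darwich}, the dissipation does not commute with spatial cut-offs, so the localization arguments that underlie the almost-monotonicity of the mass outside the concentration zone must be redone carefully. Commutator estimates of the form $\|[(-\Delta)^{s},\chi]f\|_{L^{2}}\lesssim \|f\|_{H^{\max(2s-1,0)}}$ (proved on the Fourier side) allow one to show that the contribution of the non-local tails to all the localized monotonicity formulas is at most of order $a\lambda^{2-2s}$, which is negligible once $a\le a_{0}(s)$. Establishing these commutator bounds in a form compatible with the Merle--Raphaël norms on $\varepsilon$ is the essential analytic difficulty that replaces the trivial commutativity available in \cite{Darwich}.
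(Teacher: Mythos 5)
Your high-level strategy coincides with the paper's: take $\Omega$ to be the Merle--Rapha\"el trapping set, decompose $u$ as in \eqref{decomposition}, and close the log-log bootstrap by showing the dissipation is perturbative, using the Planchon--Rapha\"el reduction to three quantitative facts --- the $L^2$ norm does not grow, and the energy and momentum grow algebraically slower than $\lambda^{-2}$ and $\lambda^{-1}$ respectively. Where you diverge is in how the energy flux is controlled, and there your argument has genuine gaps. You claim that $\mathrm{Im}\int(-\Delta)^{s}u\,|u|^{4/d}\overline{u}$ vanishes on the ground-state profile and is therefore of size $\|\varepsilon\|_{H^{1}}\lambda^{-2(s+1)}$. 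But the profile in \eqref{decomposition} is $Q_{b}$, which is genuinely complex, so the cancellation leaves a residual of order $b$, not of order $\|\varepsilon\|_{H^{1}}$; more seriously, estimating this term pointwise in time requires distributing $2s$ derivatives over functions controlled only in $H^{1}$, and after the natural splitting $\int(-\Delta)^{s/2}u\,(-\Delta)^{s/2}(|u|^{4/d}\overline{u})$ the nonlinear factor forces Lebesgue exponents that $H^{1}(\R^{d})$ does not supply once $s$ is close to $1$ and $d=3,4$ (already for $s>1/3$ in $d=3$ the required embedding fails). The paper avoids both problems by using no cancellation at all: it cuts each doubling interval $[t_{k},t_{k+1}]$ into $k$ pieces of length $\sim\lambda^{2}(t_{k})$, controls the $S^{0}$, $S^{1}$ and smoothing norms $\|(-\Delta)^{\frac{s+1}{2}}u\|_{L^{2}_{t}L^{2}_{x}}$ on each piece (these last norms are supplied by the dissipation itself, via the energy-type estimate of Lemma 4.1 and the local Strichartz lemma), and bounds the flux in mixed space-time norms to get an increment $\lesssim\lambda(t_{k})^{-2s}$ per piece, hence $|E(u(t))|\lesssim\log(\lambda(t))\,\lambda(t)^{-2s}\ll\lambda(t)^{-2}$ for $s<1$.

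Two further points. First, you never estimate the kinetic momentum $P(u)$; as the paper's Remark 4.1 notes, a bound $|P(u(t))|\lesssim\log(\lambda(t))\,\lambda(t)^{-2s/(s+1)}\ll\lambda(t)^{-1}$ is needed to control the translation parameter, and it is obtained by the same interval-splitting device together with the interpolation $\|(-\Delta)^{\frac{s}{2}+\frac14}u\|_{L^{2}}^{2}\le\|(-\Delta)^{\frac{s+1}{2}}u\|_{L^{2}}^{2\theta}\|u\|_{L^{2}}^{2-2\theta}$ with $\theta=\frac{2s+1}{2s+2}$; this is not a routine repetition of the energy computation and must be written out. Second, the commutator analysis of $[(-\Delta)^{s},\chi]$ that you single out as the main obstacle is not part of the paper's scheme: once the almost-conservation of mass, energy and momentum is established at the rates above, the localized monotonicity formulas are invoked exactly as in the $s=0$ case of \cite{Darwich}, so the real work lies in the space-time estimates you have skipped rather than in the localization you have emphasized.
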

The set of initial data $\Omega$  is the set described  in \cite{MerleRaphael1}, in order to initialize the log-log regime. It is open in $H^1$. Using the continuity  with regard to the initial data and the parameters, we easily obtain the following corollary:
\begin{corollary}\label{colloraireperturbation}
 Let $ 0<s<1 $ and $u_{0} \in H^1$ be an initial data  such that the corresponding solution $u(t)$ of (\ref{NLS}) blows up in the loglog regime. There exist $\beta_{0} > 0$ and $a_{0} > 0 $ such that if $v_{0} = u_{0} + h_{0}$, $\left\|h_{0}\right\|_{H^{1}} \leq \beta_{0}$ and $a \leq a_{0}$, the solution $v(t)$ for (\ref{NLSas}) with the initial data $v_{0}$ blows up in finite time.
\end{corollary}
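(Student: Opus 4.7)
The plan is to reduce the corollary to Theorem \ref{theorem 3} via a continuity argument: locate a time $t_{0}<T^{*}$ at which $u(t_{0})$ already sits inside the open set $\Omega$, propagate the $H^{1}$-perturbation of the initial datum and the perturbation of the equation (in $a$) up to $t_{0}$, and invoke Theorem \ref{theorem 3} on the restarted Cauchy problem.

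First, I would use the Merle--Rapha\"el characterization of log-log blowup \cite{MerleRaphael1, Merle6}: every $H^{1}$-solution of (\ref{NLS}) that blows up in the log-log regime admits, at times sufficiently close to the blowup time $T^{*}$, the modulated-soliton decomposition that defines $\Omega$. Hence there exists $t_{0}\in[0,T^{*})$ such that $u(t_{0})\in\Omega$, and since $\Omega$ is open in $H^{1}$ one can fix $\eta>0$ with $B_{H^{1}}(u(t_{0}),\eta)\subset\Omega$.

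Next, I would prove the following continuity statement on the fixed compact interval $[0,t_{0}]$: for every $\varepsilon>0$ there exist $\beta_{0},a_{0}^{\prime}>0$ such that $\|h_{0}\|_{H^{1}}\le\beta_{0}$ and $0\le a\le a_{0}^{\prime}$ imply that the solution $v$ of (\ref{NLSas}) with $v(0)=u_{0}+h_{0}$ exists on $[0,t_{0}]$ and satisfies $\sup_{[0,t_{0}]}\|v-u\|_{H^{1}}\le\varepsilon$. The difference $w=v-u$ obeys
\begin{equation*}
i w_{t} + \Delta w + \bigl(|v|^{4/d}v - |u|^{4/d}u\bigr) + i a (-\Delta)^{s} v = 0, \quad w(0)=h_{0},
\end{equation*}
and its Duhamel representation, combined with the uniform-in-$a$ Strichartz estimates of Lemma \ref{LpLp} and a standard Gronwall/contraction argument using the $H^{1}$-boundedness of $u$ on $[0,t_{0}]$, yields the desired continuity. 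Choosing $\varepsilon<\eta$ forces $v(t_{0})\in\Omega$.

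Finally, setting $a_{0}:=\min(a_{0}^{\prime},a_{0}(s))$ with $a_{0}(s)$ the constant from Theorem \ref{theorem 3}, the Cauchy problem (\ref{NLSas}) restarted at time $t_{0}$ with datum $v(t_{0})\in\Omega$ and $a\le a_{0}(s)$ falls directly under Theorem \ref{theorem 3}, producing finite-time blowup of $v$ in the log-log regime. The main technical obstacle is the continuity step: for $1/2<s<1$ the dissipative forcing $-a(-\Delta)^{s}v$ lies only in the negative-regularity space $H^{1-2s}$ when $v$ is merely bounded in $H^{1}$, so the $H^{1}$-estimate on $w$ does not close directly. I would handle this by first establishing the continuity for smoother initial data (where persistence of regularity makes $(-\Delta)^{s}v$ trivially controlled) and then passing to $H^{1}$ by density, using the $H^{1}$-continuity of the NLS flow on $[0,t_{0}]$.
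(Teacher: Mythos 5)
Your proposal is correct and follows essentially the same route as the paper: the paper's entire justification for this corollary is that $\Omega$ is open in $H^{1}$ combined with continuity of the flow with respect to the initial data and the parameter $a$, which is precisely the reduction you carry out (locate $t_{0}$ with $u(t_{0})\in\Omega$, propagate the perturbation to $t_{0}$, and invoke Theorem \ref{theorem 3}). You actually supply more detail than the paper does, in particular on the continuity-in-$a$ step.
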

%Assertion (3) of Theorem \ref{theoremprincipal} now follows directly from  this corollary together the results of  \cite{MerleRaphael1} on the $ L^2$-critical NLS equation and a  scaling argument in order to drop the smallness  condition on the damped coefficient $ a>0$. \\
\noindent 
Now to prove Theorem \ref{theorem 3}, 
we look for a solution of (\ref{NLSas}) such that for $t$ close enough to blowup time, we shall have the following decomposition:
\begin{equation}\label{decomposition}
u(t,x)=\frac{1}{\lambda^{\frac{d}{2}}(t)}(Q_{b(t)} + \epsilon)(t,\frac{x-x(t)}{\lambda(t)})e^{i\gamma(t)},
\end{equation}

for some geometrical parameters $(b(t),\lambda(t), x(t),\gamma(t)) \in (0,\infty)\times(0,\infty)\times\mathbb{R}^{d}\times\mathbb{R}$, here $\lambda(t)\sim \frac{1}{\|\nabla u(t)\|_{L^2}}$,
 and the profiles $Q_{b}$ are suitable deformations of $Q$ related to some extra degeneracy
of the problem.\\

 Note that  we will abbreviated our proof because it is very  close to the case of  
$s=0$( see Darwich\cite{Darwich}). Actually, as noticed in \cite{Planchon1}, we only need to prove that in the log-log regime
the $L^2$ norm does not grow, and the growth of the energy( resp the momentum) is below $\frac{1}{\lambda(t)^2}$ (resp $\frac{1}{\lambda(t)}$) .
In this paper, we will prove that in the
log-log regime, the growths of the energy and the momentum are bounded by:
$$E(u(t))\lesssim \log (\lambda(t))\lambda(t)^{-2s}, ~~ 	P(u(t)) \lesssim \log(\lambda(t))\lambda(t)^{\frac{-2s}{s+1}}.$$
%Our main claim now is that the controls of Proposition \ref{bootstrap lemma} determine a trapped
%dynamical region. More precisely, we claim the following bootstrapped estimates.
Let us recall that a fonction u :$[0,T]\longmapsto H^1$ follows the log-log regime if the following
 uniform controls on the decomposition (\ref{decomposition}) hold on $[0,T]$:
\begin{itemize}
\item {Control of $b(t)$}
\begin{equation}\label{b petit}
 b(t) > 0 , ~ b(t) < 10 b(0).
\end{equation}

\item Control of $\lambda$:
\begin{equation}\label{control of lambda}
\lambda(t) \leq e^{-{e^\frac{\pi}{100b(t)}}}
\end{equation}
and the monotonicity of $\lambda$:
\begin{equation}\label{monotonicity}
\lambda(t_2) \leq\frac{3}{2} \lambda(t_{1}), \forall~0\leq t_{1} \leq t_{2} \leq T.
\end{equation}
Let $k_{0} \leq k_{+} $ be integers and $T^{+} \in [0,T]$  such that
\begin{equation}\label{lambda 0 et lambda T}
\frac{1}{2^{k_{0}}} \leq \lambda(0) \leq \frac{1}{2^{k_{0}-1}}, \frac{1}{2^{k_{+}}} \leq \lambda(T^{+}) \leq \frac{1}{2^{k_{+}-1}}
\end{equation}
and for $k_{0} \leq k \leq k_{+} $, let $t_{k}$ be a time such that
\begin{equation}
\lambda(t_{k}) = \frac{1}{2^k},
\end{equation}
then we assume the control of the doubling time interval:
\begin{equation}\label{tk}
t_{k+1} - t_{k} \leq k \lambda^2(t_{k}).
\end{equation}
\item control of the excess of mass:
\begin{equation}\label{controlepsilon}
\int_{\mathbb{R}^{d}}\left|\nabla \epsilon(t)\right|^2 + \int_{\mathbb{R}^{d}}\left|\epsilon(t)\right|^2e^{-\left|y\right|} \leq \Gamma_{b(t)}^{\frac{1}{4}}, \text{where}~~ \Gamma_{b} \sim e^{-\frac{\pi}{b}}.
\end{equation}
\end{itemize}
\bigskip
The main point is to establish that \eqref{b petit}-\eqref{controlepsilon} determine a trapping region for the flow.
Actually, after the decomposition (\ref{decomposition}) of $u$, the
log-log regime corresponds to the following asymptotic controls
\begin{equation}\label{stragerie 1}
b_{s}\sim Ce^{-\frac{c}{b}},\, -\frac{\lambda_{s}}{\lambda} \sim b
\end{equation}
and 
\begin{equation}\label{stragerie 2}
\int_{\mathbb{R}^{d}}|\nabla \epsilon|^2\lesssim e^{-\frac{c}{b}},
\end{equation}
where we have introduced the rescaled time $\frac{ds}{dt} = \frac{1}{\lambda^2}$.

In fact, (\ref{stragerie 2}) is partly a consequence of the
preliminary estimate:
\begin{equation}\label{stragerie 3}
\int_{\mathbb{R}^{d}}|\nabla \epsilon|^2 \lesssim e^{-\frac{c}{b}} + \lambda^2(t) E(t).
\end{equation}
One then observes that in the log-log regime, the integration of the laws (\ref{stragerie 1}) yields
\begin{equation}\label{stragerie 4}
\lambda \sim e^{-e^{\frac{c}{b}}} \ll e^{-{\frac{c}{b}}},
\, b(t) \rightarrow 0, \, t\rightarrow T.
\end{equation}
Hence, the term involving the conserved Hamiltonian is asymptotically negligible
with respect to the leading order term $e^{-{\frac{c}{b}}}$
which drives the decay $(\ref{stragerie 3})$ of $b$. This
was a central observation made by Planchon and Raphael in \cite{Planchon1}. In fact, any growth
of the Hamiltonian algebraically below $\frac{1}{\lambda^{2}(t)}$ would be enough. 
In this paper, we will prove that in the log-log regime, the growth of the energy is estimated by:
\begin{equation}\label{decroissance}
\displaystyle{E(u(t)) \lesssim \big(\log \big(\lambda(t)\big)\big)\lambda^{-2s}(t).}
\end{equation}
%where $\alpha > 0$.
It then follows from $(\ref{stragerie 3})$ that:
\begin{equation}
\int_{\mathbb{R}^{d}}|\nabla \epsilon|^2  \lesssim e^{-{\frac{c}{b}}}.
\end{equation}
An important feature of this estimate of $H^1$ flavor is that it relies on a flux computation
in $L^2$ . This allows one to
recover the asymptotic laws for the geometrical parameters $(\ref{stragerie 1})$ and to close the
bootstrap estimates of the log-log regime.
\begin{remark}
Actually, one also needs the bound on the momentum  to control the geometrical parameters (see Lemma 7.2 in \cite{Darwich}).
\end{remark}
\subsection{Control of the
 energy and the kinetic momentum}
Let us recall that we say that an ordered  pair $(q,r) $ is admissible whenever   $\frac{2}{q} + \frac{d}{r} = \frac{d}{2}$ and $2 < q \leq \infty $. 
 We define the Strichartz norm of functions $u : [0, T]\times\mathbb{R}^{d} \longmapsto \mathbb{C}$ by:
\begin{equation}\label{us0}
\left\|u\right\|_{S^{0}([0,T]\times\mathbb{R}^{d})} = \sup_{(q,r) admissible}\left\|u\right\|_{L^{q}_{t}L^{r}_{x}([0,T]\times\mathbb{R}^{d})}
\end{equation}
and
\begin{equation}\label{us1}
\left\|u\right\|_{S^{1}([0,T]\times\mathbb{R}^{d})} =  \sup_{(q,r) admissible}\left\|\nabla u\right\|_{L^{q}_{t}L^{r}_{x}([0,T]\times\mathbb{R}^{d})}
\end{equation}
We will sometimes abbreviate $S^i([0,T]\times \mathbb{R}^{2})$ with $S^{i}_{T}$  or $S^{i}[0, T]$, $i= 1,2$. 
Now we will derive an estimate on the energy, to check that it remains small with respect to $\lambda^{-2}$:

\begin{proposition}\label{control energie}
Assuming that (\ref{control of lambda})-(\ref{controlepsilon}) hold, then the energy and kinetic momentum are controlled on $[0, T^+]$ by:\\
\begin{equation}\label{control de lenergie}
\left|E(u(t))\right|\lesssim \big(\log\big(\lambda(t)\big)\big)\lambda^{-2s}(t),
\end{equation}

\begin{equation}\label{control de moment 1}
\left|P(u(t))\right|\lesssim \big(\log\big(\lambda(t)\big)\big)\lambda^{\frac{-2s}{s+1}}(t).
\end{equation}
\end{proposition}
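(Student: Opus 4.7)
The strategy is to bound $\tfrac{d}{dt}E(u(t))$ and $\tfrac{d}{dt}P(u(t))$ pointwise by combining H\"older, the Gagliardo--Nirenberg inequality of Section~2, and Sobolev interpolation through $\dot H^{s+1}$, and then to integrate in time using the two structural features of the log-log regime: the doubling-time estimate \eqref{tk}, the quasi-monotonicity \eqref{monotonicity}, and the scaling relation $\|\nabla u(t)\|_{L^2}\sim \|\nabla Q\|_{L^2}/\lambda(t)$ extracted from \eqref{decomposition} and \eqref{controlepsilon}. The mass bound $\|u(t)\|_{L^2}\le \|u_0\|_{L^2}$ from \eqref{ff} will be used to drop $L^2$ factors.

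For the energy, start from \eqref{energie} and bound
$$
\Bigl|\int (-\Delta)^s u\,|u|^{4/d}\overline{u}\Bigr|\le \|(-\Delta)^s u\|_{L^2}\,\|u\|_{L^{8/d+2}}^{4/d+1}.
$$
The Gagliardo--Nirenberg inequality recalled in Section~2 gives $\|u\|_{L^{8/d+2}}^{4/d+1}\lesssim \|\nabla u\|_{L^2}^{2}$, while the interpolation $\|(-\Delta)^s u\|_{L^2}\lesssim \|(-\Delta)^{(s+1)/2}u\|_{L^2}^{2s/(s+1)}$ (valid for every $s\in(0,1)$ since $0\le 2s\le 2(s+1)$) reduces the $s$-derivative to the dissipative norm. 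Young's inequality with conjugate exponents $(s+1)/s$ and $s+1$ then absorbs the $\|(-\Delta)^{(s+1)/2}u\|_{L^2}^{2}$ piece into the dissipation in \eqref{energie}, so that $\tfrac{d}{dt}E(u)\lesssim a\|\nabla u\|_{L^2}^{2(s+1)}\sim a/\lambda(\tau)^{2(s+1)}$. Splitting $[0,t]$ at the dyadic times $t_k$ with $\lambda(t_k)=2^{-k}$, and using \eqref{tk} together with the lower bound $\lambda(\tau)\ge \lambda(t_k)/3$ for $\tau\in[t_k,t_{k+1}]$ coming from \eqref{monotonicity}, one obtains
$$
\int_{t_k}^{t_{k+1}}\lambda^{-2(s+1)}\,d\tau\lesssim k\,\lambda(t_k)^{-2s}=k\cdot 4^{ks},
$$
and the resulting geometric sum is dominated by its last term, yielding $|E(u(t))|\lesssim |\log\lambda(t)|\,\lambda(t)^{-2s}$.

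For the momentum, \eqref{momentum} and Cauchy--Schwarz give $|\tfrac{d}{dt}P(u)|\lesssim a\|(-\Delta)^s u\|_{L^2}\|\nabla u\|_{L^2}$. Applying once more the $\dot H^{s+1}$-interpolation to $\|(-\Delta)^s u\|_{L^2}$ and then H\"older in time with conjugate exponents $(s+1)/s$ and $s+1$,
$$
\int_0^t \|(-\Delta)^s u\|_{L^2}\|\nabla u\|_{L^2}\,d\tau\lesssim \Bigl(\int_0^t \|(-\Delta)^{(s+1)/2}u\|_{L^2}^{2}\,d\tau\Bigr)^{s/(s+1)}\Bigl(\int_0^t \|\nabla u\|_{L^2}^{s+1}\,d\tau\Bigr)^{1/(s+1)}.
$$
The first factor is bounded by rearranging the energy identity and reusing the bound from the previous paragraph, giving $\int_0^t\|(-\Delta)^{(s+1)/2}u\|_{L^2}^{2}\,d\tau\lesssim |\log\lambda(t)|\lambda(t)^{-2s}$. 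The second factor is $O(1)$ because, by the same dyadic splitting, $\int_{t_k}^{t_{k+1}}\lambda^{-(s+1)}\,d\tau\lesssim k\,2^{-k(1-s)}$ and $\sum_k k\cdot 2^{-k(1-s)}<\infty$ for $s<1$. Multiplying the two factors produces a bound strictly stronger than $|\log\lambda(t)|\,\lambda(t)^{-2s/(s+1)}$, so the stated control on $P(u(t))$ follows.

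The main obstacle is obtaining a usable estimate of $\|(-\Delta)^s u\|_{L^2}$ in the whole range $0<s<1$: for $s\le 1/2$ one could interpolate directly between $L^2$ and $\dot H^1$, but for $s>1/2$ this is unavailable and one is forced to interpolate through $\dot H^{s+1}$, which is only controlled through the dissipative term in \eqref{energie}. This is what couples the two estimates: the $\dot H^{s+1}$ factor appearing in the momentum bound must be integrated in time using the energy identity itself, so the momentum control depends on the energy control that has just been established. One must also check carefully that the geometric sums on the intervals $[t_k,t_{k+1}]$ produce exactly the logarithmic factor and the powers of $\lambda$ announced, which relies crucially on the quasi-monotonicity \eqref{monotonicity} to replace $\lambda(\tau)$ by $\lambda(t_k)$ inside each dyadic slice.
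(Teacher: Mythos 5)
Your argument is correct in outline but takes a genuinely different route from the paper's. The paper never writes a pointwise differential inequality for $E$ or $P$: using \eqref{tk} it cuts each dyadic interval $[t_k,t_{k+1}]$ into $k$ pieces of length $\lesssim\lambda^2(t_k)$ (the local existence time), and on each piece controls the increments of $E$ and $P$ via the local-in-time Strichartz estimates of Lemmas \ref{prop3} and \ref{us1}, which bound $\|u\|_{S^1}$ and $\|(-\Delta)^{\frac{s+1}{2}}u\|_{L^2_tL^2_x}$ there by $2\|u(t)\|_{H^1}\sim\lambda^{-1}(t_k)$; the nonlinear term is then handled by fractional Leibniz and space--time H\"older in the admissible norm $L^{\frac4d+2}_{t,x}$. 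You instead absorb the dissipation by Young's inequality to get $\frac{d}{dt}E\le Ca\|\nabla u\|_{L^2}^{2(s+1)}$ and integrate directly over the dyadic slices using \eqref{monotonicity}; this is more elementary (no Strichartz machinery is needed for the energy) and gives the same $k\,\lambda(t_k)^{-2s}$ per slice. Your momentum computation, after the $\dot H^{s+1}$ interpolation, reproduces exactly the paper's exponent $2\theta=\frac{2s+1}{s+1}$ on the dissipative norm; the difference is that you H\"older globally in time and must therefore supply a bound on $\int_0^t\|(-\Delta)^{\frac{s+1}{2}}u\|_{L^2}^2\,d\tau$, where the paper again invokes Lemma \ref{us1} slice by slice.

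The one genuine gap is precisely that bound. Rearranging \eqref{energie} gives $a\int_0^t\|(-\Delta)^{\frac{s+1}{2}}u\|_{L^2}^2\le E(u_0)-E(u(t))+\frac a2\int_0^t\|(-\Delta)^{\frac{s+1}{2}}u\|_{L^2}^2+Ca\int_0^t\|\nabla u\|_{L^2}^{2(s+1)}$, so you need $-E(u(t))$ bounded above, i.e.\ a \emph{lower} bound on the energy; but your first paragraph, exactly because the dissipation was absorbed with its favourable sign, only yields the \emph{upper} bound $E(u(t))\le E(u_0)+C|\log\lambda|\lambda^{-2s}$. As written, the claim $\int_0^t\|(-\Delta)^{\frac{s+1}{2}}u\|_{L^2}^2\lesssim|\log\lambda(t)|\lambda(t)^{-2s}$ is circular. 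It is fixable: the crude bound $-E(u(t))\le\frac{d}{4+2d}\|u(t)\|_{L^{\frac4d+2}}^{\frac4d+2}\lesssim\|\nabla u(t)\|_{L^2}^2\|u_0\|_{L^2}^{\frac4d}\sim\lambda(t)^{-2}$ gives $a\int_0^t\|(-\Delta)^{\frac{s+1}{2}}u\|_{L^2}^2\lesssim\lambda(t)^{-2}$, and feeding this weaker input into your H\"older step still produces $|P(u(t))-P(u_0)|\lesssim a^{\frac1{s+1}}\lambda(t)^{-\frac{2s}{s+1}}$, so \eqref{control de moment 1} survives. What you do not obtain is the two-sided estimate \eqref{control de lenergie}: your method gives $E(u(t))\le E(u_0)+C|\log\lambda|\lambda^{-2s}$ but only $E(u(t))\ge-C\lambda^{-2}$. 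Since only the upper bound on $E$ enters the log-log bootstrap (and the paper's own proof silently discards the dissipative term when estimating $|E(u(\tau_k^{j+1}))-E(u(\tau_k^j))|$), this is a defect shared with the original argument rather than a fatal flaw, but you should either state explicitly that you prove a one-sided bound or insert the control of the dissipation integral before claiming the absolute value.
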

To prove Proposition \ref{control energie}, we will need the two following lemmas.
\begin{lemma}\label{prop3}
Let $u\in C([0,T]; H^1 (\R^d))$ be a solution of (\ref{NLSas}). Then we have the following estimation:
$$
\|\nabla u\|_{L^\infty_T L^2_x} + \|(\Delta) ^{\frac{s+1}{2}} u \|_{L^2_ T L^2_x} \lesssim \| \nabla u_0\|_{L^2_x} + \||u|^{\frac{4}{d}}\nabla u\|_{L^1_T L^2_x}
$$
\end{lemma}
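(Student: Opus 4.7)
The plan is to derive an $H^1$ energy estimate by differentiating $\|\nabla u\|_{L^2}^2$ in time, identifying the dissipative term coming from $-a(-\Delta)^s u$, and bounding the nonlinear contribution by the right-hand side of the target inequality. Following the pattern used for Lemma~\ref{prop1}, one should first work with smooth solutions so that all manipulations are justified, then conclude for $H^1$ data by density.

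First I would write the equation in the form $u_t = i\Delta u + i|u|^{4/d}u - a(-\Delta)^s u$ and compute
\[
\frac{1}{2}\frac{d}{dt}\|\nabla u\|_{L^2}^{2}=\mathrm{Re}\!\int \nabla u_t \cdot \overline{\nabla u} = \mathrm{Re}\!\int u_t \,\overline{(-\Delta u)}.
\]
Substituting the three pieces of $u_t$: the term $i\Delta u$ contributes $\mathrm{Re}(-i\int|\Delta u|^2)=0$; the dissipation term yields $-a\,\mathrm{Re}\!\int(-\Delta)^s u\cdot(-\Delta\bar u)=-a\,\|(-\Delta)^{(s+1)/2}u\|_{L^2}^2$ by Plancherel (since $(-\Delta)^s$ and $-\Delta$ are Fourier multipliers); and the nonlinear term gives $\mathrm{Im}\!\int |u|^{4/d}u\,\Delta\bar u$. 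Altogether,
\[
\frac{1}{2}\frac{d}{dt}\|\nabla u\|_{L^2}^{2}+a\,\|(-\Delta)^{(s+1)/2}u\|_{L^2}^{2}= \mathrm{Im}\!\int |u|^{4/d}u\,\Delta\bar u.
\]

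Next, for the nonlinear contribution, integrate by parts once and use $|\nabla(|u|^{4/d}u)|\lesssim |u|^{4/d}|\nabla u|$ together with Cauchy--Schwarz:
\[
\Bigl|\mathrm{Im}\!\int |u|^{4/d}u\,\Delta \bar u\Bigr|=\Bigl|\mathrm{Im}\!\int \nabla(|u|^{4/d}u)\cdot\nabla\bar u\Bigr|\lesssim \bigl\||u|^{4/d}\nabla u\bigr\|_{L^2}\|\nabla u\|_{L^2}.
\]
Integrating in $t$ on $[0,\tau]\subset [0,T]$, taking the sup in $\tau$, and setting $M:=\|\nabla u\|_{L^\infty_T L^2}$, $N:=\||u|^{4/d}\nabla u\|_{L^1_T L^2}$ gives
\[
M^{2}+ 2a\,\|(-\Delta)^{(s+1)/2}u\|_{L^2_T L^2}^{2}\le \|\nabla u_0\|_{L^2}^{2}+C\,M\,N.
\]
A standard Young inequality $CMN\le M^2/2+C'N^2$ absorbs $M$ on the left and yields $M\lesssim \|\nabla u_0\|_{L^2}+N$, after which substituting back produces the desired control on $\|(-\Delta)^{(s+1)/2}u\|_{L^2_T L^2}$.

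The only delicate point is making sense of the pointwise-in-time energy identity at the $H^1$-regularity of the statement: one should first carry out the calculation for smooth solutions (as in the proof of Lemma~\ref{prop1}), and then pass to the limit via the continuity with respect to initial data from Proposition~\ref{prop1}. The Gagliardo--Nirenberg/subcritical structure of $|u|^{4/d}$ in dimensions $d=1,2,3,4$ (combined with $u\in L^\infty_T H^1$ for the approximants) makes the approximation argument routine, so I expect this to be the main technical step but not a serious obstacle.
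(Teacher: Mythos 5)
Your proposal is correct and follows essentially the same route as the paper: the paper multiplies the equation by $\overline{\Delta u}$, integrates and takes the imaginary part (which is exactly your computation of $\tfrac{1}{2}\tfrac{d}{dt}\|\nabla u\|_{L^2}^2$ with the dissipative term isolated), integrates by parts on the nonlinear term, applies Cauchy--Schwarz, and then absorbs $\|\nabla u\|_{L^\infty_T L^2}$ by a division trick equivalent to your Young-inequality step. Your added remark about first working with smooth solutions and passing to the limit is a point the paper leaves implicit, but it does not alter the argument.
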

\begin{proof}
Multiply Equation \ref{NLSas} by $\overline{ \Delta u}$, integrate and take the imaginary part, to obtain :\\
$$ \frac{1}{2}\frac{d}{dt}\| \nabla u \|^2 + a \int |(-\Delta)^{\frac{s+1}{2}} u|^2 \leq |\int |u|^{\frac{4}{d}} u \Delta u| =|\int \nabla(|u|^{\frac{4}{d}}u )\nabla u|$$

By integrating in time, we get 
$$
\frac{1}{2}\| \nabla u \|^2_{L^\infty_T L^2} + a \|(-\Delta)^{\frac{s+1}{2}} u\|^2_{L_{T}^2L_x^2} \leq \frac{1}{2}\| \nabla u_0 \|^2_{L^2}  +\|\nabla (|u|^{\frac{4}{d}}u)\|_{L_T^1L_x^2}\|\nabla u\|_{L^{\infty}_T {L^2}}
$$

Dividing by $\sqrt{\frac{1}{2}\| \nabla u \|^2_{L^\infty_T L^2} + a \|(-\Delta)^{\frac{s+1}{2}} u\|^2_{L_{T}^2L_x^2}}$
we obtain:
$$
\|\nabla u\|_{L^\infty_T L^2_x} + \|(\Delta) ^{\frac{s+1}{2}} u \|_{L^2_T L^2_x} \lesssim \| \nabla u_0\|_{L^2} + \||u|^{\frac{4}{d}}\nabla u\|_{L^1_T L^2_x}
$$
\end{proof}
\begin{lemma}\label{us1} There exists a real number $ 0<\alpha \ll 1 $ such that the following holds: 
Let  $u\in C([0,T]; H^1(\R^d))$  be the  solution of $(\ref{NLSas})$ emanating from $u_{0}\in H^1$.  For a fixed $ t\in ]0,T[ $ we set   $\Delta t = \alpha\, \left\|u_{0}\right\|^{\frac{d-4}{d}}_{L^2}\left\|u(t)\right\|_{H^{1}}^{-2}$. Then $ u\in C([t,t+\Delta t]; H^1(\R^d))$ and we have the following controls\\
$$\left\|u\right\|_{{S^{0}[t,t+\Delta t]}} \leq 2\left\|u_{0}\right\|_{L^2} 
$$
and 
$$
 \left\|u\right\|_{{S^{1}[t,t+\Delta t]}} + 
\|(-\Delta)^{\frac{s+1}{2}}u\|_ {L^2(]t,t+\Delta t[)L^2_x} \leq 2 \left\|u(t)\right\|_{H^1(\mathbb{R}^{d})}
$$
\end{lemma}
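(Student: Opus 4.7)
The plan is a Strichartz bootstrap on the interval $[t, t+\Delta t]$, treating $u(t)$ as the initial datum. By the blow-up alternative, it suffices to establish the two a priori bounds on any subinterval of the maximal existence interval intersected with $[t, t+\Delta t]$. I would combine three ingredients: the mass control $\|u\|_{L^\infty L^2} \leq \|u_0\|_{L^2}$ coming from \eqref{ff}; the Strichartz estimates for $S_{a,s}$ (Lemma \ref{LpLp}) applied to Duhamel's formula; and the $H^1$ energy identity of Lemma \ref{prop3}, which produces the dissipation norm $\|(-\Delta)^{(s+1)/2} u\|_{L^2 L^2}$ with the good sign at no cost.

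For the $S^0$-norm, Strichartz applied to
$$u(t+\tau) = S_{a,s}(\tau)\, u(t) - i\int_0^\tau S_{a,s}(\tau-\tau')\, |u|^{4/d} u \, d\tau'$$
together with the self-dual $L^2$-critical admissible pair $(q_0, r_0) = (\tfrac{2(d+2)}{d}, \tfrac{2(d+2)}{d})$ yields the schematic estimate $\|u\|_{S^0} \lesssim \|u(t)\|_{L^2} + \|u\|_{S^0}^{1+4/d}$. Since the nonlinearity is $L^2$-critical, no direct $\Delta t$-gain is available; instead, smallness of the linear part on the critical pair is produced by Sobolev embedding $H^1 \hookrightarrow L^{r_0}$ together with a H\"older gain in time, $\|S_{a,s}(\cdot)u(t)\|_{L^{q_0}_{[t,t+\Delta t]} L^{r_0}} \lesssim (\Delta t)^{1/q_0} \|u(t)\|_{H^1}$. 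For the $S^1$-norm and the dissipation bound I would use Lemma \ref{prop3} together with Strichartz applied to the equation satisfied by $\nabla u$, and estimate the nonlinear quantity $\||u|^{4/d}\nabla u\|_{L^1_T L^2_x}$ by H\"older in time plus a Gagliardo--Nirenberg interpolation of $|u|^{4/d}$ between $\|u\|_{L^\infty L^2} \leq \|u_0\|_{L^2}$ and $\|\nabla u\|_{L^\infty L^2}$. This yields a bound of schematic form $(\Delta t)^\theta \|u_0\|_{L^2}^{(4-d)/d} \|\nabla u\|_{L^\infty L^2}^{1+4/d}$, and the prescribed choice $\Delta t = \alpha \|u_0\|_{L^2}^{(d-4)/d} \|u(t)\|_{H^1}^{-2}$ is exactly what balances this contribution against $\|\nabla u(t)\|_{L^2}$ under the bootstrap hypothesis.

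The argument is closed by a standard continuity: let $T^* \in (t, t+\Delta t]$ be maximal such that $\|u\|_{S^0[t,T^*]} \leq 2\|u_0\|_{L^2}$ and $\|u\|_{S^1[t,T^*]} + \|(-\Delta)^{(s+1)/2} u\|_{L^2_{[t,T^*]} L^2_x} \leq 2\|u(t)\|_{H^1}$ both hold. With $\alpha$ small enough (depending only on the Strichartz and Sobolev constants), the above estimates upgrade these bounds to factor $3/2$ on $[t, T^*]$, forcing $T^* = t+\Delta t$. The main obstacle is the $L^2$-critical scaling of the nonlinearity, which prevents a direct smallness gain from $\Delta t$ at the $S^0$ level: one has to route the subcritical $(\Delta t)^\theta$ factor through the Sobolev embedding on the linear part and through the Gagliardo--Nirenberg on the nonlinear part at the $S^1$ level, which is precisely why the time scale depends on both $\|u_0\|_{L^2}$ (through the critical exponent) and $\|u(t)\|_{H^1}$ (through the subcritical gain). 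The nonlocal dissipation $ia(-\Delta)^s u$ itself is benign: it enters Lemma \ref{prop3} with the favorable sign and provides the $\|(-\Delta)^{(s+1)/2} u\|_{L^2 L^2}$ control essentially for free.
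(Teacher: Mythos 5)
Your overall architecture---Duhamel plus the Strichartz bounds of Lemma \ref{LpLp}, the identity of Lemma \ref{prop3} to produce the dissipation norm with the good sign, a subcritical $(\Delta t)^{\theta}$ gain extracted from the $H^1$ regularity, and a continuity argument---is exactly the paper's. The step that does not survive scrutiny is your schematic bound $\||u|^{4/d}\nabla u\|_{L^1_TL^2_x}\lesssim (\Delta t)^{\theta}\|u_0\|_{L^2}^{(4-d)/d}\|\nabla u\|_{L^\infty_T L^2}^{1+4/d}$. If the factor $\nabla u$ is only measured in $L^\infty_t L^2_x$, H\"older in space forces $|u|^{4/d}$ into $L^\infty_x$ pointwise in time, and $\|u\|_{L^\infty}$ is not controlled by $\|u\|_{L^2}$ and $\|\nabla u\|_{L^2}$ for $d\ge 2$: Gagliardo--Nirenberg between $L^2$ and $\dot H^1$ cannot supply any integrability above $L^2$ for the \emph{derivative} factor, so the estimate is not derivable from $L^\infty_t(L^2\cap\dot H^1)$ alone. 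This is precisely why the paper distributes the derivative over the admissible pair $(\tfrac{4+d}{d},\tfrac{8+2d}{d})$, writing $\||u|^{4/d}\nabla u\|_{L^1L^2}\le \|u\|^{4/d}_{L^{(4+d)/d}_tL^{(8+2d)/d}_x}\|\nabla u\|_{L^{(4+d)/d}_tL^{(8+2d)/d}_x}$, controlling the last factor by $\|u\|_{S^1}$ (which is part of the bootstrap), and gaining $(\Delta t)^{d/(d+4)}$ on the first factor via H\"older in time, the embedding $H^{2d/(d+4)}\hookrightarrow L^{(8+2d)/d}$ and interpolation between $L^2$ and $H^1$. Note also that your exponents do not balance: scaling forces $\theta=2/d$ in your schematic bound, which leaves a residual factor $\|u_0\|_{L^2}^{(d-4)(2-d)/d^2}$ for $d=1,3$, so $\alpha$ could not be universal; in the paper's chain the combination $(\Delta t)^{d/(d+4)}\|u_0\|_{L^2}^{(4-d)/(d+4)}\|u\|_{H^1}^{2d/(d+4)}$ collapses exactly to a power of $\alpha$ under the bootstrap hypothesis.

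Your treatment of the $S^0$ norm also departs from the paper: rather than making the linear flow small on the critical pair, the paper reuses the same nonlinear estimate with the last $\nabla u$ replaced by $u$, giving $\|u\|_{S^0}\lesssim \|u(t)\|_{L^2}+(\Delta t)^{d/(d+4)}\|u_0\|_{L^2}^{(4-d)/(d+4)}\|u\|_{S^1}^{2d/(d+4)}\|u\|_{S^0}$, which closes using the $S^1$ control already in the bootstrap. Your variant can be made to work, but after the necessary interpolation the linear part on the critical pair is only bounded by $(\alpha\|u_0\|_{L^2}^2)^{d/(2(d+2))}$, so once again the required smallness of $\alpha$ would depend on $\|u_0\|_{L^2}$; the paper's route avoids this entirely.
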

\begin{proof} We first assume that  $ \Delta t>0 $ is such that $ t+\Delta t< T $.
Then, according to Lemma \ref{LpLp}, it holds 
$$\left\|\int_{0}^{t} S_{a,s}(t-\tau)|u(\tau)|^{\frac{4}{d}}u(\tau)  d\tau \right\|_{S^{1}[t,t+\Delta t]} \lesssim \left\||u|^{\frac{4}{d}}\nabla u\right\|_{L^1(]t,t+\Delta t[)L^2_x}\, .$$
Using the  H\"older inequality we obtain:
\begin{equation}
\displaystyle{\big(\int\left|u\right|^{\frac{8}{d}}\left|\nabla u\right|^2\big)^{\frac{1}{2}} \leq \big(\int \left|u\right|^{\frac{2(4+d)}{d}}\big)^{\frac{2}{4+d}}\big(\int \left|\nabla u\right|^{\frac{2(4+d)}{d}} \big)^{\frac{d}{2(4+d)}}.}\nonumber
\end{equation}
Integrating in time and applying again H\"older inequality we get:
\begin{align}
\left\|\left|u\right|^{\frac{4}{d}}\nabla u\right\|_{{L^{1}]t,t+\Delta t[)}L^{2}(\mathbb{R}^{d})}&\leq \bigg(\int\big(\int \left|u\right|^{\frac{2(4+d)}{d}}dx\big)^{\frac{2}{4+d}\frac{4+d}{4}}dt\bigg)^{\frac{4}{4+d}}\nonumber\\
&\times \bigg(\int\big(\int \left|\nabla u\right|^{\frac{2(4+d)}{d}}dx\big)^{\frac{d}{2(4+d)}\frac{4+d}{d}}dt\bigg)^{\frac{d}{4+d}}.\nonumber
\end{align} 
Thus:
\begin{equation}
\displaystyle{\left\|\left|u\right|^{\frac{4}{d}}\nabla u\right\|_{L^{1}(]t,t+\Delta t[)L^{2}(\mathbb{R}^{d})} \leq \left\|u\right\|^{\frac{4}{d}}_{L^{\frac{4+d}{d}}(]t,t+\Delta t[)L^{\frac{8+2d}{d}}(\mathbb{R}^{d})}\left\|\nabla u\right\|_{L^{\frac{4+d}{d}}(]t,t+\Delta t[)L^{\frac{8+2d}{d}}(\mathbb{R}^{d})}}.\nonumber
\end{equation}
But $(\frac{4+d}{d}, \frac{8+2d}{d})$ is admissible, thus we have:
\begin{equation}
\displaystyle{\left\|\left|u\right|^{\frac{4}{d}}\nabla u\right\|_{{L^{1}([t,t+\Delta t])}L^{2}(\mathbb{R}^{d})} \leq \left\|u\right\|^{\frac{4}{d}}_{L^{\frac{4+d}{d}}([t,t+\Delta t])L^{\frac{8+2d}{d}}(\mathbb{R}^{d})}\left\|u\right\|_{S^{1}[t,t+\Delta t]}}.\nonumber
\end{equation}
By Sobolev inequalities we have:
\begin{align}
\left\|u\right\|_{L^{\frac{4+d}{d}[t,t+\Delta t]}L^{\frac{8+2d}{d}}(\mathbb{R}^{d})} &\lesssim \left\|u\right\|_{L^{\frac{4+d}{d}}([t,t+\Delta t])H^{\frac{2d}{d + 4}}(\mathbb{R}^{d})}\nonumber\\
&\leq (\Delta t)^{\frac{d}{d+4}}\left\|u\right\|_{L^{\infty}([t,t+\Delta t])H^{\frac{2d}{d + 4}}(\mathbb{R}^{d})}.\nonumber
\end{align}
Now by interpolation we obtain for $d=1,2,3,4$:
\begin{equation}
\displaystyle{\left\|u\right\|_{L^{\frac{4+d}{d}}([t,t+\Delta t])L^{\frac{8+2d}{d}}(\mathbb{R}^{d})} \leq (\Delta t)^{\frac{d}{d+4}}\left\|u\right\|_{L^{\infty}([t,t+\Delta t])L^{2}(\mathbb{R}^{d})}^{\frac{4-d}{d+4}}\left\|u\right\|_{L^{\infty}([t,t+\Delta t])H^{1}(\mathbb{R}^{d})}^{\frac{2d}{d+4}}}\, , \nonumber
\end{equation}
 which,  according to (\ref{masse}), leads to 
 \begin{equation}\label{estis1}
\|u^{\frac{4}{d}}\nabla u\|_{L^1_tL^2_x} \leq (\Delta t)^{\frac{d}{d+4}}\left\|u_0\right\|_{L^2(\mathbb{R}^{d})}^{\frac{4-d}{d+4}}\left\|u\right\|_{S^{1}[t, t + \Delta t]}^{\frac{2d}{d+4}}\left\|u\right\|_{S^{1}[t,t+\Delta t]}\, .
\end{equation}
Since by Lemma \ref{prop3} it holds 
$$\|u\|_{L^\infty (]t,t+\Delta t[)H^1}  + \|(\Delta) ^{\frac{s+1}{2}} u \|_{L^2(]t,t+\Delta t[)L^2_x} \lesssim \|  u(t)\|_{H^1} + \||u|^{\frac{4}{d}}\nabla u\|_{L^1(]t,t+\Delta t[)L^2_x} \; , 
$$
we finally get 
\begin{align*}
\left\|u\right\|_{S^{1}[t, t + \Delta t]} + \|(-\Delta)^{\frac{s+1}{2}} u\|_{L^2(]t,t+\Delta t[)L^2} &\lesssim \|u(t)\|_{H^1} +(\Delta t)^{\frac{d}{d+4}}\left\|u_0\right\|_{L^2(\mathbb{R}^{d})}^{\frac{4-d}{d+4}}\left\|u\right\|_{S^{1}[t, t + \Delta t]}^{\frac{2d}{d+4}+1}\, .
\end{align*}
In view of  \eqref{oo} and a continuity argument, it follows  that $ u\in C([t,t+\Delta t]; H^1(\R^d)) $   for some $\Delta t \sim \left\|u_0\right\|_{L^2(\mathbb{R}^{d})}^{\frac{d-4}{d}}\left\|u(t)\right\|^{-2}_{H^1(\mathbb{R}^{d})}$ and 
$$
\left\|u\right\|_{S^{1}[t, t + \Delta t]} + \|(-\Delta)^{\frac{s+1}{2}} u\|_{L^2(]t,t+\Delta t[)L^2} \leq 2\|u_0\|_{H^1} \; .
$$
In the same way
 $$
\left\|u\right\|_{S^{0}[t,t+\Delta t]} \lesssim \left\|u(t)\right\|_{L^2(\mathbb{R}^{d})} + (\Delta t)^{\frac{d}{d+4}}\left\|u_0\right\|^{\frac{4-d}{4+d}}_{L^2(\mathbb{R}^{d})} \left\|u\right\|_{S^{1}[t,t+\Delta t]}^{2\frac{d}{d+4}}\left\|u\right\|_{S^{0}[t,t+\Delta t]}
$$
which ensures that 
$$
\left\|u\right\|_{S^{0}[t,t+\Delta t]} \leq 2 \left\|u_{0}\right\|_{L^2(\mathbb{R}^{d})}\; .
$$
\end{proof}

\noindent
{\it Proof of Proposition \ref{control energie} } : According to \eqref{tk}, each interval $[t_k, t_{k+1}]$, can be
divided into $k$ intervals, $[\tau_k^j,\tau_k^{j+1}]$ of length less that $ \lambda(t_k) $. From (\ref{energie}), we have 

$$|E(u(\tau_k^{j+1})) - E(u(\tau_k^{j}))| \lesssim  \Bigl|\int_{\tau_k^{j}}^{\tau_k^{j+1}}\int_{\R^d}� (-\Delta)^s u\,  |u|^{\frac{4}{d}}\overline{u}dx \,\Bigr| \, .$$
For notation convenience we set $ \Theta=]\tau_k^{j}, \tau_k^{j+1}[\times\R^d$. By  Cauchy-Schwarz, it holds 
 $$\int_{\tau_k^{j}}^{\tau_k^{j+1}}\int_{\R^d}(-\Delta)^s u |u|^{\frac{4}{d}}\overline{u} dx\, dt \leq \|(-\Delta)^{\frac{s}{2}}(u|u|^{\frac{4}{d}})\|_{L^{\frac{4+2d}{d}}(\Theta)}
 \|(-\Delta)^{\frac{s}{2}}u\|_{L^{\frac{4+2d}{d}}(\Theta)}$$
 and, by interpolation, we have 
 $$
\|(-\Delta)^{\frac{s}{2}}u\|^2_{L^{\frac{4+2d}{d}}(\Theta)} \leq \|\nabla u\|^{2s}_{L^{\frac{4+2d}{d}}(\Theta)}\|u\|_{L^{\frac{4+2d}{d}}(\Theta)}^{2-2s}\, .
$$
Noticing that the fractional Leibniz rule leads to 
$$\|(-\Delta)^{\frac{s}{2}}(|u|^{\frac{4}{d}}u)\|_{L^{\frac{4+2d}{4+d}}(\Theta)} \lesssim  \|(-\Delta)^{\frac{s}{2}}u\|_{L^{\frac{4+2d}{d}}(\Theta)}
\|u^{\frac{4}{d}}\|_{L^{\frac{4+2d}{4}}(\Theta)}\lesssim  \|(-\Delta)^{\frac{s}{2}}u\|_{L^{\frac{4+2d}{d}}}\|u\|^{\frac{4}{d}}_{L^{\frac{4+2d}{d}}(\Theta)}\, , $$
 we finally obtain 
$$|E(u(\tau_k^{j+1})) - E(u(\tau_k^{j}))| \lesssim \|u\|_{L^{\frac{4+2d}{d}}(\Theta)}^{\frac{4}{d}}\|\nabla u\|^{2s}_{L^{\frac{4+2d}{d}}(\Theta)}
\|u\|_{L^{\frac{4+2d}{d}}(\Theta)}^{2-2s}\, .$$
%and by Gagliardo-Niremberg inequalities:
Since $(\frac{4}{d}+2,\frac{4}{d}+2)$ is an admissible pair,  Lemma \ref{us1}  yields 
$$|E(u(\tau_k^{j+1})) - E(u(\tau_k^{j}))| \lesssim \lambda(t_k)  ^{-2s}$$
and summing  over $ j $  we get 
$$|E(u(t_{k+1})) - E(u(t_k)) |\lesssim  k \lambda(t_k)  ^{-2s}\, .$$
Finally,  taking $T^+ = T$ and summing from $k_0$ to $k^+$, we obtain:
$$
|E(u(T^+)) - E(u_0) |\lesssim k^+\lambda^{-2s}(T^+)\lesssim  \log (\lambda(T))\lambda^{-2s}(T).
$$
Note that the growth of the energie is small with to respect $\frac{1}{\lambda^2}$, because $s<1$.\\
Let us now proceed with the momentum. According to (\ref{momentum}) we have :

$$|P(u(\tau_k^{j+1})) - P(u(\tau_k^{j}))|\lesssim \int_{\tau_k^{j}}^{\tau_k^{j+1}} \Bigl| \int_{\R^d} \overline{\nabla u} (-\Delta)^s u \Bigr| \, .
$$
But
$$ \Bigl| \int_{\R^d}  \overline{\nabla u}(-\Delta)^s u \Bigr| =  \Bigl|\int_{\R^d} (-\Delta)^{\frac{s}{2}+\frac{1}{4}}u (-\Delta)^{\frac{s}{2}-\frac{1}{4}}\overline{\nabla u }  \Bigr| \leq \|(-\Delta)^{\frac{s}{2}+\frac{1}{4}}u\|_{L^2(\R^d)}\|(-\Delta)^{\frac{s}{2}-\frac{1}{4}}\nabla u\|_{L^2(\R^d)} $$
with 
$$\|(-\Delta)^{\frac{s}{2}-\frac{1}{4}}\nabla u\|_{L^2(\R^d)} = \|(-\Delta)^{\frac{s}{2}+\frac{1}{4}}u\|_{L^2(\R^d)}$$
and, by interpolation,
$$
\|(-\Delta)^{\frac{s}{2}+\frac{1}{4}}u\|^2_{L^2(\R^d)}  \leq \|(-\Delta)^{\frac{s}{2}+\frac{1}{2}}u\|^{2\theta}_{L^2(\R^d)}\|u\|_{L^2(\R^d)}^{2-2\theta}\, , 
$$
where $0 < \theta= \frac{2s+1}{2s+2} < 1$.
Therefore we get 
$$
|P(u(\tau_k^{j+1})) - P(u(\tau_k^{j}))|\lesssim  (\tau_{k+1}-\tau_k)^{1-\theta} \|u_0\|_{L^2(\R^d)}^{2-2\theta}
\|(-\Delta)^{\frac{s}{2}+\frac{1}{2}}u\|^{2\theta}_{L^2(\Theta)}
$$
and Lemma \ref{us1} ensures  that
$$
|P(u(\tau_k^{j+1})) - P(u(\tau_k^{j}))|\lesssim   \lambda^{2-2\theta} \lambda^{-2\theta} = \lambda^{2-4\theta} = \lambda^{\frac{-2s}{s+1}}\, .
$$
Summing over $ j $ we obtain that:
$$
|P(u(\tau_k) - P(u(\tau_k))|\lesssim    k  \lambda^{\frac{-2s}{s+1}}
$$
and summing from $k_0$ to $k^+$, we finally get 
$$
|P(u(T^+)) - P(u_0))|\lesssim \log(\lambda(T))\lambda(T)^{\frac{-2s}{s+1}} \; .
$$
Note that the growth of the momentum  is small with respect $\frac{1}{\lambda}$ since $1-\frac{2s}{s+1} > 0.$

%By  Gagliardo-Nirenberg inequality we have:

%$$\|(-\Delta)^{s}u\|_{L^2} \leq \|\nabla u\|^{s}_{L^2}\|u\|_{L^2}^{1-s}
%$$

%then
%$$
%\frac{d}{dt}P(u(t)) \leq (\int |u|^{2)})^{\frac{1}{2}}\left\| \nabla u\right\|_{L^2} \leq C \left\| \nabla u\right\|^{1+s}_{L^2}.
%$$
%Then:
%$$
%\int_{\tau_{k}^j}^{\tau_{k}^{j+1}}\frac{d}{dt}P(u(t))\leq C (\tau_{k}^{j+1}-\tau_{k}^{j})\left\| \nabla u(\tau_{k}^j)\right\|^{1+s}_{L^2}
%\leq C\left\| \nabla u(t_k)\right\|^{-1+s}_{L^2}
%$$
%Summing successively into $j$ and $k$ we obtain that:$$
%\int_0^{T^+}\frac{d}{dt}P(u(t)) \lesssim \log (\lambda(T^+))\lambda^{1-s}(T^+).$$

\end{document}